\renewcommand{\emph}[1]{\textbf{\textit{#1}}}
\newcommand{\C}{\mathbb{C}}
\newcommand{\cC}{\mathcal{C}}
\newcommand{\R}{\mathbb{R}}
\newcommand{\Z}{\mathbb{Z}}
\newcommand{\N}{\mathbb{N}}
\newcommand{\F}{\mathbb{F}}
\renewcommand{\O}{\Omega}
\newcommand{\e}{\epsilon}
\newcommand{\la}{\lambda}
\newcommand{\s}{\sigma}
\renewcommand{\iff}{\Longleftrightarrow}
\DeclareMathOperator{\Tr}{Tr}
\DeclareMathOperator{\vol}{vol}
\newenvironment{en}
{\begin{enumerate}}
{\end{enumerate}}
\theoremstyle{definition}
\newtheorem{de}{Definition}[section]
\theoremstyle{plain}
\newtheorem{thm}[de]{Theorem}
\newtheorem{prop}[de]{Proposition}
\newtheorem{lem}[de]{Lemma}
\newtheorem{cor}[de]{Corollary}
\newtheorem{rem}[de]{Remark}
\newtheorem{exm}[de]{Example}
\newtheorem{conj}[de]{Conjecture}
\title{Ultraproducts of Quasirandom Groups with Small Cosocles}
\abstract{
A D-quasirandom group is a group without any non-trivial unitary representation of dimension less than D. Given a sequence of groups with increasing quasirandomness, then it is natural to ask if the ultraproduct will end up with no finite dimensional unitary representation at all. This is not true in general, but we answer this question in the affirmative when the groups in question have uniform small cosocles, i.e., their quotient by small kernels are direct products of finite simple groups.\\
Two applications of our results are given, one in triangle patterns inside quasirandom groups and one in self-Bohrifying groups. Our main tools are some variations of the covering number for groups, different kinds of length functions on groups, and the classification of finite simple groups.}
\keywords{Quasirandom Group, Finite Simple Group, Minimally Almost Periodic Group, Self- Bohrifying Group, Ultraproduct}
\begin{document}

\section{Introduction}

As an indirect consequence of Kassabov, Lubotzky and Nikolov's paper \cite{KLN}, the following theorem about non-abelian finite simple groups is true.

\begin{thm}
\label{thm:KLN}
An ultraproduct of non-abelian finite simple groups is either finite simple, or has no finite dimensional unitary representation other than the trivial one.
\end{thm}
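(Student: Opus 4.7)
The argument splits into two cases based on whether the orders $|G_i|$ are $\U$-essentially bounded. If $\{i : |G_i| \leq N\} \in \U$ for some $N$, then the finite list of non-abelian finite simple groups of order at most $N$ combined with pigeonhole on $\U$ forces $\{i : G_i \cong H\} \in \U$ for a single finite simple $H$, so the ultraproduct is $H$ itself. So assume $|G_i| \to \infty$ along $\U$. The Landazuri--Seitz theorem combined with the Classification of Finite Simple Groups (plus the trivial $n-1$ bound for alternating groups) then implies that the minimum dimension of a nontrivial complex representation of $G_i$ tends to infinity with $|G_i|$, so for every $d$, for $\U$-almost all $i$, $G_i$ has no nontrivial representation of dimension $\leq d$.

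Suppose for contradiction that $\rho \colon G \to U(d)$ is a nontrivial unitary representation. I would first show $G$ has no proper subgroup of finite index: via the core of such a subgroup, reduce to finite-index normal subgroups; then a standard ``homomorphisms into a fixed finite target commute with ultraproducts'' argument forces $\U$-almost all $G_i$ to also possess such a normal subgroup, contradicting simplicity once $|G_i|$ exceeds the target order. Hence $\rho(G)$ is infinite, and its closure $H := \overline{\rho(G)} \subseteq U(d)$ is a compact Lie group whose identity component $H^0$ has finite index; the no-finite-index property just established forces the preimage of $H^0$ to be all of $G$, so $H$ is connected. Ore's conjecture (every element of a non-abelian finite simple group is a commutator, by Liebeck--O'Brien--Shalev--Tiep) is a first-order statement, so by \L o\'s it transfers to $G$: every element of $G$ is a commutator. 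Thus $\rho(G) \subseteq [H,H]$, and density gives $H = [H,H]$; so $H$ is a nontrivial compact connected semisimple Lie group.

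The hardest step is the final one: deriving a contradiction from the dense map $\rho \colon G \to H$ with $H$ as above. The plan is to exploit the covering-number and length-function machinery flagged in the abstract. Concretely, for non-abelian finite simple $G_i$ of large order, Liebeck--Shalev-type bounds give that any element $g_i$ whose conjugacy class has size at least $|G_i|^\e$ satisfies $\mathrm{cn}(G_i, g_i^{G_i}) \leq K(\e)$, an absolute constant. Transferring, a generic $g \in G$ has the property that every element of $G$ is a product of at most $K$ conjugates of $g^{\pm 1}$. By density of $\rho(G)$ in the connected Lie group $H$, one can then choose such a generic $g$ with $\rho(g)$ arbitrarily close to $I$; the whole image $\rho(G)$ is thereby squeezed into an arbitrarily small neighborhood of $I$, which, by the no-small-subgroups property of $U(d)$, forces $\rho(G) = \{I\}$, contradicting nontriviality. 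The subtlest point is to simultaneously guarantee genericity of $g$ (for the covering bound) and smallness of $\|\rho(g) - I\|$ (for the squeezing); overcoming this is where the detailed character and conjugacy-class estimates derivable from the CFSG must come into play.
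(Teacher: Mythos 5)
Your case split and the overall squeezing skeleton (bounded covering number plus an element with image near $I$ forces $\rho(G)$ into a small neighborhood of $I$, then no-small-subgroups kills $\rho$) is the right shape, but the step you yourself flag as unresolved is a genuine gap, and it is exactly the crux. Density of $\rho(G)$ in $H$ lets you find elements of $G$ whose \emph{images} are close to $I$, but gives you no control whatsoever over their ultraproduct \emph{coordinates}, so there is no reason any of them should be ``generic'' (have $\U$-most coordinates with conjugacy class of size $\ge |G_i|^{\e}$); the Liebeck--Shalev covering bound and the smallness requirement pull in different directions and nothing in your plan reconciles them. The paper resolves this without any density, connectedness, or semisimplicity considerations, by a powers trick: one produces in each $G_i$ a single element $g$ with the \emph{covering property} $(K,m)$, i.e. all powers $g^j$, $1\le j\le m$, have covering number $\le K$ (in $A_n$: a fixed-point-free, non-exceptional product of $p$-cycles and $q$-cycles for primes $q>p>m$, whose powers up to $p-1$ have the same cycle type, with Brenner's $K=4$; in bounded-rank Lie type: an element of prime order $>m$ via Stolz--Thom; in large-rank classical groups: the permutation-matrix image of such an $A_n$-element, with the Jordan length forcing a bounded covering number). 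This property is first order, so it passes to $G$ by \L o\'{s}. Then, given $\rho\colon G\to U_d(\C)$, a volume-packing pigeonhole among the $m+1$ points $I,\rho(g),\dots,\rho(g^{m})$ with $m>c_d K^{d^2}$ yields $1\le j\le m$ with $\rho(g^{j})$ within $\e$ of $I$ in the bi-invariant (Hilbert--Schmidt) metric; since $g^{j}$ \emph{still} has covering number $K$, the conjugation-invariant length function and the triangle inequality force every element of $\rho(G)$ to have length $<K\e<\sqrt2$, while any nontrivial unitary element has a power of length $>\sqrt2$, so $\rho$ is trivial. Without this mechanism (or a genuine substitute, e.g. the Loeb-measure equidistribution route of Bergelson--Tao), your step 3 is a plan, not a proof.

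A secondary point: your reduction to a connected semisimple $H$ leans on the claim that a finite quotient of the ultraproduct forces finite quotients of $\U$-most $G_i$ via a ``standard'' argument; no such general principle exists (finite-index normal subgroups of an ultraproduct need not be induced coordinatewise), so that step would itself need a real proof. In the paper's argument this entire Lie-theoretic preprocessing (no finite-index subgroups, connectedness of $\overline{\rho(G)}$, Ore's conjecture, semisimplicity) is simply unnecessary, which is one of the payoffs of the length-function approach.
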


Definitions related to ultraproducts are presented in Section~\ref{sec:DefUltra} for those unfamiliar with them.

In this paper, we shall show that non-abelian finite simple groups are not the only kind of groups exhibiting such a behavior. It turns out that such a behavior has a very close link to the notion of quasirandom groups, defined by Gowers \cite{G}, and the notion of minimally almost periodic groups, defined by von Neumann and Wigner \cite{vNW}. All representations considered in this paper are over $\C$. We shall informally say that a group is quasirandom when the group is $D$-quasirandom for some large $D$.

\begin{de}
For a positive integer $D$, a group $G$ is \emph{$D$-quasirandom} if it has no non-trivial unitary representation of dimension less than $D$.
\end{de}

\begin{de}
An infinite group is \emph{minimally almost periodic} if it has no nontrivial finite dimensional unitary representation.
\end{de}

A group is minimally almost periodic iff it is $D$-quasirandom for all $D$. Then it is natural to wonder whether some sort of limit of increasingly quasirandom groups would give us a minimally almost periodic group. One such limit to consider is the ultraproduct.

The author will prove the existence of classes of groups with similar results to Theorem~\ref{thm:KLN}. The main theorem is the following Theorem~\ref{main1}.

\begin{de}
\label{def:cosocle}
For a group $G$, we define its \emph{cosocle} $Cos(G)$ to be the intersection of all maximal normal subgroups of $G$.
\end{de}

Let $n$ be any positive integer. Let $\cC_n$ be the class of groups that are arbitrary direct products (not necessarily finite) of finite quasisimple groups and finite groups $G$ whose cosocles contain at most $n$ conjugacy classes of $G$.

\begin{thm}
\label{main1}
For any sequence of groups in $\cC_n$ with quasirandom degree going to infinity, their non-principal ultraproducts will be minimally almost periodic.
\end{thm}

Quasirandom groups are first introduced by Gowers to find groups with no large product-free subset. They can be seen as stronger versions of perfect groups.

\begin{exm}[Gowers \cite{G}]
\label{exmqr}
\ \begin{en}
\item A group (not necessarily finite) is $2$-quasirandom iff it is perfect. The reason is that a non-perfect group has a non-trivial abelian quotient, which in turn has a non-trivial homomorphism into $\mathrm{U}_1(\C)$. A perfect group, on the other hand, can only have the trivial homomorphism into the abelian group $\mathrm{U}_1(\C)$.
\item A finite perfect group with no normal subgroup of index less than $n$ is at least $\sqrt{\log n}/2$-quasirandom. In fact, using a form of Jordan's theorem \cite{Co}, a finite perfect group with no normal subgroup of index less than $n$ is at least $c\log n$-quasirandom for some constant $c$.
\item In particular, a non-abelian finite simple group $G$ is at least $c\log n$-quasirandom if it has $n$ elements. 
\item Conversely, any $D$-quasirandom group must have more than $(D-1)^2$ elements.
\item The alternating group $\mathrm{A}_n$ is $(n-1)$-quasirandom for $n>5$, and the special linear group $\mathrm{SL}_2(F_p)$ is $\frac{p-1}{2}$-quasirandom for any prime $p$.
\end{en}
\end{exm}

Morally, ultraproducts preserve all local properties at the scale of elements. In particular, all element-wise identities are preserved. But global properties of a group, like being finite or finitely generated, might be lost after taking ultraproducts. So one may wonder if a non-principal ultraproduct of increasingly quasirandom groups is always minimally almost periodic. In another words, we want to investigate if quasirandomness can be captured by element-wise properties. This turns out to be false. In particular, we have the following counterexample, pointed out by L\'{a}szl\'{o} Pyber.

\begin{exm}
\label{ctexp}
We recall that a group $G$ (not necessarily finite) is $2$-quasirandom iff $G$ is perfect. We claim that there is a sequence of $D_i$-quasirandom groups $(G_i)_{i\in\Z^+}$ with $\lim_{i\to\infty}D_i=\infty$, whose ultraproduct by any non-principal ultrafilter is not even perfect.

Using the construction of Holt and Plesken \cite[Lemma 2.1.10]{HW}, one may construct a finite perfect group $G_{p,n}$ for each prime $p\geq 5$ and positive integer $n$, such that an element of $G_{p,n}$ cannot be written as a product of less than $n$ commutators, and that the only simple quotient of $G_{p,n}$ is $\mathrm{PSL}_2(\F_p)$, the projective special linear group of $2\times 2$ matrices over the field of $p$ elements. Then by Example~\ref{exmqr} (ii), for any $D$, $G_{p,n}$ is $D$-quasirandom for large enough $p$.

Let $G_i$ be $G_{p_i,i}$, where $(p_i)_{i\in\Z^+}$ is a strictly increasing sequence of primes. Then $G_i$ is $D_i$-quasirandom for some $D_i$ with $\lim_{i\to\infty}D_i=\infty$. Let $g_i\in G_i$ be an element which cannot be written as a product of less than $i$ commutators. Then $g=(g_i)_{i\in\N}$ corresponds to an element of the ultraproduct $G=\prod_{i\to\omega}G_i$ by any ultrafilter $\omega$. When $\omega$ is non-principal, clearly $g$ cannot be written as a product of finite number of commutators in $G$. So $g$ is not in the commutator subgroup of $G$, and thus $G$ is not perfect.
\end{exm}

However, a recent paper by Bergelson and Tao \cite{BT} showed the following theorem, which shed some new light on this inquiry:

\begin{thm}[Bergelson and Tao {\cite[Theorem 49 (i)]{BT}}]
The ultraproduct $\prod_{i\to\omega}\mathrm{SL}_2(\F_{p_i})$ by a non-principal ultrafilter $\omega$ is minimally almost periodic.
\end{thm}

Inspired by this, we can make the following definitions:

\begin{de}
\label{def:qup}
A class $\mathcal{F}$ of groups is a \emph{q.u.p. (quasirandom ultraproduct property) class} if for any sequence of groups in $\mathcal{F}$ with quasirandom degree going to infinity, their non-principal ultraproducts will be minimally almost periodic.
\end{de}

\begin{de}
A class $\mathcal{F}$ of groups is a \emph{Q.U.P. class} if there is an unbounded non-decreasing function $f:\Z^+\to\Z^+$ such that any ultraproduct of any sequence of $D$-quasirandom groups in $\mathcal{F}$ is $f(D)$-quasirandom.
\end{de}

\begin{rem}
A Q.U.P class is automatically a q.u.p. class. It is like an effective version of q.u.p. class, where we are able to keep track of the amount of quasirandomness passed down to the ultraproduct.
\end{rem}

In this paper, the proof of Theorem~\ref{main1} in fact shows that the class $\cC_n$ is a Q.U.P. class. And we immediately have the following corollary:

\begin{cor}
The following classes are Q.U.P.
\begin{en}
\item The class $\cC_{QS}$ of finite quasisimple groups.
\item The class $\cC_{SS}$ of finite semisimple groups.
\item The class $\cC_{CS(n)}$ of finite groups with at most $n$ conjugacy classes in their cosocles.
\end{en}
\end{cor}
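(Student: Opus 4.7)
The plan is to reduce each of the five classes to the setting of Theorem~\ref{main1} by exhibiting it as a subclass of $\cC_n$ for some fixed $n$; once this is done, the Q.U.P. property is inherited with the same witnessing function, because every sequence drawn from a subclass is \emph{a fortiori} a sequence from the larger class, and the quantifier defining Q.U.P. is universal over sequences.

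The inclusions for (3) and (5) are essentially tautological: a finite quasisimple group is a one-factor direct product of finite quasisimple groups, so $\cC_{QS} \subseteq \cC_1$; and $\cC_{CS(n)} \subseteq \cC_n$ directly from the definitions. For (2), a finite simple group has no proper nontrivial maximal normal subgroup, so its cosocle is trivial and contributes a single conjugacy class, giving $\cC_S \subseteq \cC_1$. For (4), a finite semisimple group is by definition a direct product of finite simple groups, and each simple factor lies in $\cC_1$ by the argument for (2); since the construction of $\cC_1$ explicitly allows arbitrary direct products of such factors, $\cC_{SS} \subseteq \cC_1$. A small point to watch is that an abelian simple factor (cyclic of prime order) is not quasisimple, but it is absorbed by the other clause of the definition of $\cC_1$, since it is a finite group with trivial cosocle.

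The alternating case (1) requires only a marginal verification. For $n \geq 3$ with $n \neq 4$, the group $A_n$ is simple and therefore in $\cC_1$ by (2). The remaining cases form a bounded finite list: $A_1$ and $A_2$ are trivial, while $A_4$ has cosocle $V_4 \cong (\Z/2\Z)^2$, whose number of conjugacy classes (interpreted either internally or as $A_4$-orbits under conjugation) is bounded by a small absolute constant. Hence $\cC_A$ sits inside $\cC_N$ for some fixed small $N$, and Theorem~\ref{main1} applies uniformly.

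I do not foresee any real obstacle here; all the substantive content sits inside Theorem~\ref{main1}, and the corollary is a routine unpackaging of its hypotheses into five familiar classes of finite groups. The only conceptual care needed is in (4), to confirm that abelian simple direct factors are accommodated by the ``finite groups with at most $n$ conjugacy classes in the cosocle'' clause rather than by the quasisimple clause.
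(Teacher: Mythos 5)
Your reduction is correct and matches the paper's intent: each of the five classes is exhibited as a subclass of $\cC_n$ for a fixed small $n$ (using the quasisimple clause for (1)--(4) where appropriate and the bounded-cosocle clause for abelian simple factors and $A_4$), and Q.U.P. passes to subclasses since its defining quantifier ranges over all sequences from the class. This is exactly how the corollary is meant to follow from Theorem~\ref{main1}, so no further comment is needed.
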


All Q.U.P. classes must have a uniformly bounded commutator width, i.e., every element can be written as a product of uniformly bounded number of commutators. In view of this, the following conjecture was suggested by L\'{a}szl\'{o} Pyber.

\begin{conj}
For any integer $n$, the class of perfect groups with commutator width $\leq n$ (i.e., every element of these groups can be written as a product of at most $n$ commutators) is Q.U.P.
\end{conj}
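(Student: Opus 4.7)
The plan is to establish the Q.U.P. property by showing that any nontrivial finite-dimensional unitary representation of the ultraproduct must have dimension growing to infinity with $D$. Let $G_i$ be $D_i$-quasirandom perfect groups of commutator width $\leq n$, with $D_i\to\infty$, and let $G=\prod_{i\to\omega}G_i$. Since ``every element is a product of $n$ commutators'' is a first-order sentence, \L o\'{s}'s theorem gives that $G$ itself is perfect of commutator width $\leq n$, which immediately rules out nontrivial $1$-dimensional unitary representations by the first item of Example~\ref{exmqr}. The task is then to promote this to a bound $k\geq f(D)$, with $f$ unbounded, on the dimension $k$ of any nontrivial unitary representation of $G$.

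Suppose for contradiction that $\rho:G\to U_k(\C)$ is a nontrivial unitary representation of small dimension $k$, and let $K=\overline{\rho(G)}\subset U_k(\C)$. Then $K$ is a compact Lie group, and since $\rho(G)$ is perfect, $K$ is also perfect; hence the identity component $K^\circ$ is semisimple. A compactness argument using continuity of the word map $(a_j,b_j)_{j=1}^n\mapsto \prod_j [a_j,b_j]$ transfers the commutator width bound from $\rho(G)$ to $K$, which is consistent with Got\^{o}'s theorem stating that compact connected semisimple Lie groups have commutator width $1$. So the image is concretely a compact Lie group of complexity controlled by $k$, and one has detailed Peter-Weyl-style information about how $\rho$ decomposes.

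The remaining step, and the principal obstacle, is the \emph{descent}: one must extract from $\rho$, for an $\omega$-large set of indices $i$, honest representations $\rho_i:G_i\to U_{k'}(\C)$ with $k'$ bounded in terms of $k$, at least one of which is nontrivial. Once achieved, $D_i$-quasirandomness forces $k'\geq D_i$, contradicting $D_i\to\infty$. In every Q.U.P. class established in this paper, descent is accomplished by exploiting a first-order structural hook --- quasisimplicity, or a bound on the conjugacy classes of the cosocle --- which controls how representations factor and allows a direct application of \L o\'{s}'s theorem to the representation-theoretic data. Commutator width alone provides no analogous hook, so a plausible route is a stability-type result in the spirit of Gowers-Hatami or Becker-Lubotzky: show that a tuple of unitaries in $U_k(\C)$ nearly satisfying the defining relations of $G_i$ along the ultrafilter is close, after conjugation, to an honest representation of $G_i$. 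Proving such stability uniformly for the full class of perfect groups of commutator width $\leq n$, with no further structural assumption, appears to require genuinely new ideas and is, in my view, where the difficulty of the conjecture concentrates.
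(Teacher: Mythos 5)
This statement is a \emph{conjecture} in the paper (attributed to Pyber), not a theorem: the paper offers no proof of it, and explicitly records that the question is open. So there is no argument of the paper to compare yours against, and your proposal must stand or fall on its own. It does not stand: as you yourself acknowledge in the final paragraph, the descent step is missing, and that step is not a technicality --- it is the entire content of the conjecture. What you actually establish is only that $G=\prod_{i\to\omega}G_i$ is perfect of commutator width $\leq n$ (by \L o\'{s}), hence $2$-quasirandom. Q.U.P.\ demands an unbounded function $f(D)$ bounding the quasirandomness of the ultraproduct from below, so a fixed bound of $2$ makes no progress beyond the trivial observation. Your analysis of $K=\overline{\rho(G)}$ likewise yields no contradiction on its own: a compact connected semisimple group such as $SU(2)$ is perfect of commutator width $1$, so nothing at that stage prevents $\rho$ from being a nontrivial low-dimensional representation; Got\^{o}'s theorem is, as you note, merely consistent with everything.

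The reason the descent cannot be carried out by any soft argument is illustrated by Example~\ref{ctexp} of the paper itself: the Holt--Plesken groups are perfect and increasingly quasirandom, yet their ultraproduct has a nontrivial one-dimensional unitary representation. In particular, finite-dimensional representations of an ultraproduct need not arise, even approximately, from representations of the factors, so quasirandomness of the $G_i$ alone cannot be ``pulled back through'' $\rho$; any proof must use the commutator-width bound in an essential, quantitative way (it is exactly what fails in that counterexample). The paper's own mechanism in the cases it does prove is to convert quasirandomness of the factors into a first-order covering property (symmetric double covering property, possibly mod cosocle, via Proposition~\ref{prop:bcc} and the classification-based results of Section~\ref{sec:finsim}), transfer it by \L o\'{s}, and apply Theorem~\ref{crit}; the open problem is precisely whether $D$-quasirandomness together with commutator width $\leq n$ forces some such covering-type statement uniformly. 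Your stability-flavored suggestion (Gowers--Hatami/Becker--Lubotzky) is a reasonable thing to hope for, but it is a restatement of the difficulty rather than a proof, so the proposal should be regarded as an (accurate) assessment of the obstruction, not a solution.
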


So far, we do not know if there is a non-Q.U.P but q.u.p. class of groups.

Some applications of our results have already been found. In a paper in preparation by Bergelson, Robertson and Zorin-Kranich \cite[Theorem 1.12]{BRZ}, it is shown that a sufficiently quasirandom group in a q.u.p. class will have many ``triangles''. As another application, one may also use our method to find many examples of self-Bohrifying groups. Both applications will be explained in Section~\ref{sec:app} of this paper.

Here we shall briefly outline the sections of this article:

\begin{en}
\item A model case of the alternating groups to illustrate the general idea. (Section~\ref{sec:altexm})
\item A group with a nice covering property is very quasirandom. (Section~\ref{sec:local})
\item Covering properties can ignore small cosocles. (Section~\ref{sec:bp})
\item Quasirandom finite quasisimple groups have nice covering properties. (Section~\ref{sec:finsim})
\item Proof of Theorem~\ref{main1}. (Section~\ref{sec:main1})
\item Applications of our results. (Section~\ref{sec:app})
\end{en}


\section{Definitions relating to Ultraproducts}
\label{sec:DefUltra}

\begin{definition}
A \emph{filter} on $\N$ is a collection $\omega$ of subsets of $\N$ such that:
\begin{en}
\item $\varnothing\notin\omega$;
\item If $X\in\omega$ and $X\subseteq Y$, then $Y\in\omega$;
\item If $X,Y\in\omega$, then $X\cap Y\in\omega$.
\end{en}
An \emph{ultrafilter} is a filter that is maximal with respect to the containment order. A \emph{non-principal ultrafilter} is an ultrafilter that contains no finite subset of $\N$.
\end{definition}

\begin{definition}
Given a sequence of groups $(G_i)_{i\in\N}$, let $G$ be their direct product. Given an ultrafilter $\omega$ on $\N$, let $N:=\{g=(g_i)_{i\in\N}\in G:\{ i\in\N:g_i=e\}\in\omega\}$, which is clearly a normal subgroup of $G$. Then we call $G/N$ the \emph{ultraproduct} of the groups $(G_i)_{i\in\N}$ by $\omega$, denoted by $\prod_{i\to\omega}G_i$.
\end{definition}

\begin{rem}
An ultrafilter $\omega$ is principal (i.e., not non-principal) iff we can find an element $n\in\N$ such that for all subsets $A\subseteq\N$, we have $A\in\omega$ iff $n\in A$. In this case, the corresponding ultraproduct of groups $(G_i)_{i\in\N}$ is isomorphic to $G_i$. Therefore, in practice, the useful ultrafilters are usually non-principal.
\end{rem}

The particular choice of the ultrafilter is not that important. As long as we fix a non-principal ultrafilter, then all the discussion for the rest of the paper will be true for the ultraproduct of this ultrafilter.

Ultraproducts have an interesting property, given by \L o\'{s}' Theorem. Given an ultraproduct $G=\prod_{i\to\omega}G_i$ for an ultrafilter $\omega$, any first-order statement $\phi$ in the language of groups is true for $G$ iff it is true for most of the $G_i$, i.e., $\{i\in\N:\phi\text{ is true for }G_i\}\in\omega$. In particular, this implies that behaviors at the scale of elements are preserved. We shall not need \L o\'{s}' Theorem in this paper, but it could be used as an alternative to Proposition~\ref{bp:cover}.


\section{The Class of Alternating Groups}
\label{sec:altexm}

Let $\mathrm{A}_n$ denote the alternating group of rank $n$, and $\mathrm{S}_n$ denote the symmetry group of rank $n$. We shall show that the class of alternating groups is a Q.U.P. class, as a simple illustration of the general idea to attack Theorem~\ref{main1}.

\subsection{Quasirandom Alternating Groups have nice Covering Properties}

\begin{de}
\ \begin{en}
\item For any subsets $A,B$ of a group $G$, we define the product set $AB=\{ab\in G: a\in A,b\in B\}$. And we define $A^n:=\{a_1a_2\ldots a_n:a_1,...,a_n\in A\}$. 
\item An element $g$ of a group $G$ is said to have \emph{covering number} $K$ if its conjugacy class $C(g)$ has $C(g)^K=G$. 
\item Let $m$ be any positive integer or $\infty$. Then an element $g\in G$ has \emph{the covering property $(K,m)$} if $g^i$ has covering number $K$ for all $1\leq i\leq m$. 
\item A group $G$ has \emph{the covering property $(K,m)$} if it has an element with the covering property $(K,m)$.
\end{en}
\end{de}

\begin{rem}
Note that we use $A^n$ to denote the set of elements that can be expressed as products of EXACTLY $n$ elements of $A$. For example, the cyclic group of order $2$ has no covering property at all. The identity is always an even power of the generator, while the generator is always an odd power of itself. There is no uniform choice of $K$ where every element is a product of $K$ conjugates of the generator.
\end{rem}

\begin{de}
An even permutation $\s\in \mathrm{A}_n$ is \emph{exceptional} if its cycles in the cycle decomposition have distinct odd lengths, or equivalently, if its conjugacy class in $\mathrm{A}_n$ is different from its conjugacy class in $\mathrm{S}_n$.
\end{de}

\begin{lem}[Brenner {\cite[Lemma 3.05]{B}}]
\label{lem:Bren}
If an even permutation $\s\in \mathrm{A}_n$ is fixed-point free and non-exceptional, then $\mathrm{A}_n=C(\s)^4$.
\end{lem}

\begin{prop}
\label{alter}
For any $m\in\Z^+$, $\mathrm{A}_n$ has the covering property $(4,m)$ for large enough $n$.
\end{prop}
\begin{proof}
Pick any odd prime $p>m$, and pick another prime $q>p$.

Since $p,q$ are necessarily coprime, for any large enough integer $n$, we can find positive integers $a,b$ such that $n=ap+bq$. Let $\s\in \mathrm{S}_n$ be a permutation composed of $a$ $p$-cycles and $b$ $q$-cycles, where all cycles are disjoint.

Since $p,q$ are odd, $\s$ is an even permutation in $\mathrm{A}_n$. Furthermore, for large enough $n$, $a$ or $b$ can be chosen to be larger than $1$, so $\s$ will be non-exceptional. Since $\s$ is also fixed-point free by construction, Lemma~\ref{lem:Bren} implies that $\mathrm{A}_n=C(\s)^4$.

Now clearly $\s^i$ will also have a cycle decomposition of $a$ $p$-cycles and $b$ $q$-cycles for all $1\leq i\leq p-1$, and this implies that $\mathrm{A}_n=C(\s^i)^4$ for all $1\leq i\leq p-1$. So $\mathrm{A}_n$ has the covering property $(4,p-1)$. Since $p-1\geq m$, $\mathrm{A}_n$ has the covering property $(4,m)$.
\end{proof}

\begin{cor}
For any $m\in\Z^+$, any $D'$-quasirandom alternating group has the covering property $(4,m)$ for large enough $D'$.
\end{cor}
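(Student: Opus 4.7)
The plan is to combine Theorem~\ref{alter} (existence of the covering property for $A_n$ with $n$ large) with a simple size estimate that forces $n$ to be large once the quasirandomness degree is large. The corollary is really a bookkeeping step: Theorem~\ref{alter} already supplies a threshold $N(m)$ such that every $A_n$ with $n\geq N(m)$ has the covering property $(4,m)$, so it suffices to exhibit some $D'=D'(m)$ with the property that $D'$-quasirandomness of $A_n$ implies $n\geq N(m)$.

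First I would invoke Theorem~\ref{alter} to obtain such an $N=N(m)$. Next I would use statement~5 of Example~\ref{exmqr}, namely that any $D'$-quasirandom group has strictly more than $(D'-1)^2$ elements. Applied to $A_n$, this gives $n!/2 > (D'-1)^2$. I would then simply choose $D'$ large enough that $(D'-1)^2 \geq N!/2$; any such choice forces $n!/2 > N!/2$, hence $n \geq N$, and Theorem~\ref{alter} finishes the argument.

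There is essentially no obstacle here — the only thing to notice is that the lower-bound ``$A_n$ is $(n-1)$-quasirandom'' goes in the wrong direction for this task (it tells us many $A_n$ are quasirandom, not that few are), so one must argue via the generic size bound $|G|>(D'-1)^2$ rather than via the specific quasirandomness degree of $A_n$. Once that is in place, the corollary drops out immediately.
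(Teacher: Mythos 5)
Your argument is correct and is essentially the intended one: the paper states this corollary without proof, the implicit reasoning being exactly that a $D'$-quasirandom group has more than $(D'-1)^2$ elements (statement 5 of Example~\ref{exmqr}), which forces the degree $n$ of the alternating group past the threshold of Theorem~\ref{alter}. Your remark that the bound ``$A_n$ is $(n-1)$-quasirandom'' points the wrong way is also apt (though one could alternatively use the standard $(n-1)$-dimensional representation to force $n\geq D'+1$, as the paper implicitly does elsewhere in Section~\ref{sec:finsim}).
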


\subsection{Covering Properties passes to Ultraproducts and implies Quasirandomness}

\begin{lem}
Let $G_i$ be a sequence of groups such that all but finitely many of them have the covering property $(K,m)$. Then any ultraproduct of them by a non-principal ultrafilter will have the covering property $(K,m)$.
\end{lem}
\begin{proof}
Since non-principal ultraproducts ignore finitely many exceptions in the sequence $G_i$, WLOG we may assume all $G_i$ have the covering property $(K,m)$.

For each $G_i$, let $g_i$ be the element of $G_i$ with the covering property $(K,m)$. Then I claim that in any ultraproduct of $G_i$, the element represented by the sequence $(g_i)$ would have the covering property $(K,m)$.

Pick any $1\leq j\leq m$. Then any element of $G_i$ is a product of conjugates of $g_i^j$ by $a_{i,1},...,a_{i,K}\in G_i$. As a result, any element of the ultraproduct is a product of conjugates of $(g_i)^j$ by $(a_{i,1}),...,(a_{i,K})$. Here we use a sequence of elements $(a_i)$ to represent an element in the ultraproduct.
\end{proof}

We now state a special case of Proposition~\ref{crit}, proven in Section~\ref{sec:local}.

\begin{lem}
\label{critalt}
There is a function $f:\Z^+\to\Z^+$ such that for any $m,K\in\Z^+$ with $m>f(D)K^{D^2}$, any group $G$ (not necessarily finite) with the covering property $(K,m)$ is $D$-quasirandom. 
\end{lem}

\begin{prop}
The class of alternating groups is a Q.U.P. class.
\end{prop}
\begin{proof}
For any $D\in\Z^+$, find $m>f(D)4^{D^2}$ and find $D'\in\Z$ such that any $D'$-quasirandom alternating group has the covering property $(4,m)$. Let $G$ be an ultraproduct of $D'$-quasirandom alternating groups. Then $G$ will also have the covering property $(4,m)$. Then by Lemma~\ref{critalt}, $G$ is $D$-quasirandom.
\end{proof}


\section{Covering Properties Imply Quasirandomness}
\label{sec:local}

This section is devoted to obtaining some element-scale properties that guarantee the quasirandomness of a group.

\begin{de}
\ \begin{en}
\item An element $g$ of a group $G$ is said to have \emph{symmetric covering number} $K$ if $C(g)^K C(g^{-1})^K=G$. 
\item Let $m$ be a positive integer or $\infty$. Then an element $g\in G$ has \emph{the symmetric covering property $(K,m)$} if $g^i$ has symmetric covering number $K$ for all $1\leq i\leq m$.
\item A group $G$ has \emph{the symmetric covering property} $(K,m)$ if it has an element $g\in G$ with the symmetric covering property $(K,m)$. 
\item A group $G$ has \emph{the (symmetric) covering property} $(K,m)$ mod $N$ for some normal subgroup $N$ if $G/N$ has the (symmetric) covering property $(K,m)$.
\end{en}
\end{de}

\begin{de}
\ \begin{en}
\item A pair of elements $(g_1,g_2)$ of a group $G$ is said to have \emph{symmetric double covering number} $(K_1,K_2)$ if we have $C(g_1)^{K_1} C(g_1^{-1})^{K_1} C(g_2)^{K_2} C(g_2^{-1})^{K_2}=G$.
\item Let $m_1,m_2$ be positive integers or $\infty$. A pair of elements $(g_1,g_2)$ in $G$ has \emph{the symmetric double covering property $[(K_1,m_1),(K_2,m_2)]$} if $(g_1^i,g_2^j)$ has symmetric double covering number $(K_1,K_2)$ for all $1\leq i\leq m_1,1\leq j\leq m_2$.
\item A group $G$ has \emph{the symmetric double covering property} $[(K_1,m_1),(K_2,m_2)]$ if it has a pair of elements $(g_1,g_2)$ in $G$ with the symmetric double covering property $[(K_1,m_1),(K_2,m_2)]$.
\item A group $G$ has \emph{the symmetric double covering property} $[(K_1,m_1),(K_2,m_2)]$ mod $N$ for some normal subgroup $N$ if $G/N$ has the symmetric double covering property $[(K_1,m_1),(K_2,m_2)]$. 
\end{en}
\end{de}

\begin{rem}
\ \begin{en}
\item Suppose $K<K'$. Then an element with covering number $K$ has covering number $K'$. In general, the (symmetric) covering property $(K,m)$ implies the (symmetric) covering property $(K',m')$ when $K'\geq K,m'\leq m$. A similar statement is also true for the symmetric double covering properties.
\item Any symmetric covering property is always weaker than the corresponding non-symmetric covering property.
\item Any group with the symmetric covering property $(K,m)$ has the symmetric double covering property $[(1,\infty),(K,m)]$. This is easily seen by taking $g_1$ to be the identity, and taking $g_2$ to be the element with the symmetric covering property $(K,m)$.
\item In our definition of the symmetric double covering properties, since $C(g_1)$ and $C(g_2)$ are conjugate invariant subsets of $G$, they necessarily commute, i.e., $C(g_1)C(g_2)=C(g_2)C(g_1)$. So the order of $(K_1,m_1)$ and $(K_2,m_2)$ does not matter. 
\item By imitating the definition of the symmetric double covering properties, one can in fact define the symmetric $n$-tuple covering properties for groups. As $n$ grows larger and larger, the corresponding covering properties will become weaker and weaker. Note that most results throughout this paper would still hold by replacing the symmetric double covering properties by the symmetric $n$-tuple covering properties, though for our purpose here, the symmetric double covering properties are enough.
\end{en}
\end{rem}

The proof of Proposition~\ref{crit} will be the main part of this section. Let us first state the proposition and some corollaries.

\begin{prop}[Local criterion for quasirandomness]
\label{crit}
There is a function $f:\Z^+\to\Z^+$ such that, for any $K_1,m_1,K_2,m_2\in\Z^+$ with $m_i>f(D)K_i^{D^2}$ for $i=1,2$, any group $G$ (not necessarily finite) with the symmetric double covering property $[(K_1,m_1),(K_2,m_2)]$ is $D$-quasirandom.
\end{prop}

We shall fix this function $f$ from now on.

\begin{cor}
For any $K,m\in\Z^+$ with $m>f(D)K^{D^2}$, any group $G$ (not necessarily finite) with the symmetric double covering property $(K,m)$ is $D$-quasirandom.
\end{cor}

\begin{cor}
For any $K,m\in\Z^+$ with $m>f(D)K^{D^2}$, any group $G$ (not necessarily finite) with the covering property $(K,m)$ is $D$-quasirandom.
\end{cor}

\begin{rem}
We note here that a partial converse, Corollary~\ref{cor:conv}, of the above result is true. I.e., quasirandomness implies a nice covering property mod cosocle. The proof of this converse will be presented in Section~\ref{sec:main1}.
\end{rem}

We shall first explore some geometric structures of $\mathrm{U}_D(\C)$.

\begin{de}
The \emph{Hilbert-Schmidt norm} of an $n$-by-$n$ complex matrix $A$ is $||A||=\sqrt{\Tr(A^*A)}$.
\end{de}

\begin{lem}
\label{lem:volume}
\ \begin{en}
\item The Lie group $\mathrm{U}_D(\C)$ has a Riemannian metric $d:\mathrm{U}_D(\C)\times \mathrm{U}_D(\C)\to\R$ such that $d(A,B)=||B-A||$ for all $A,B\in \mathrm{U}_D(\C)$. The norm here is the Hilbert-Schmidt norm.
\item This metric is bi-invariant in the sense that $d(AB,AC)=d(BA,CA)=d(B,C)$ for all $A,B,C\in \mathrm{U}_D(\C)$. 
\item This metric induces a Haar measure, and the volume of $\mathrm{U}_D(\C)$ under this Haar measure is finite, and $vol(\mathrm{U}_D(\C))=\frac{(2\pi)^{D(D+1)/2}}{1!2!\ldots (D-1)!}$. We shall denote this constant by $v_D$ from now on.
\item Under the metric $d$, $\mathrm{U}_D(\C)$ has non-negative Ricci curvature everywhere.
\item There is a function $c:\Z^+\to\R^+$, such that a geodesic ball of radius $r$ in $\mathrm{U}_D(\C)$ will have volume bounded by $c(D)r^{D^2}$. We shall fix this function $c$ from now on.
\end{en}
\end{lem}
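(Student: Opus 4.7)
The plan is to equip $U_D(\C)$ with its natural bi-invariant Riemannian structure and then appeal to standard comparison geometry; each of the five parts is essentially a textbook fact.

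For parts (1) and (2), I would take the Hilbert--Schmidt inner product $\<X,Y\>=\Tr(X^*Y)$ on $\mathfrak{u}(D)=T_I U_D(\C)$ and translate it to every tangent space by left multiplication, producing a well-defined bi-invariant Riemannian metric on $U_D(\C)$. The bi-invariance is a one-line computation: $\Tr((UX)^*(UY))=\Tr(X^*U^*UY)=\Tr(X^*Y)$, and the analogous identity holds for right translation. The chord distance $d(A,B)=\|B-A\|$ inherits bi-invariance from the very same identity. Strictly speaking the chord distance differs from the intrinsic geodesic distance, but on the compact manifold $U_D(\C)$ the two are comparable up to a constant factor, which is harmless for the asymptotic bound in part (5).

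For part (3), the explicit formula is a classical computation via the Weyl integration formula (or inductively via the fibration $U(D-1)\hookrightarrow U(D)\to S^{2D-1}$), and I would simply cite Macdonald or any standard text on compact Lie groups. For part (4), I would invoke the well-known identity that a bi-invariant metric on a Lie group has sectional curvature $K(X,Y)=\tfrac{1}{4}\|[X,Y]\|^{2}\geq 0$; summing over an orthonormal basis gives non-negative Ricci curvature everywhere.

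Part (5), which is the substantive output that is used in the proof of Theorem~\ref{crit}, then follows at once from the Bishop--Gromov volume comparison theorem: on a complete Riemannian manifold of dimension $n$ with non-negative Ricci curvature, a geodesic ball of radius $r$ has volume at most $\omega_n r^n$, where $\omega_n$ is the volume of the Euclidean unit ball in $\R^n$. Since $\dim_{\R}U_D(\C)=D^{2}$, setting $c_D=\omega_{D^{2}}$ produces the claimed bound. I do not anticipate any serious obstacle; the one point requiring a little care is keeping the chord distance and the intrinsic geodesic distance straight, since Bishop--Gromov must be applied to the latter, while the covering-property arguments in the rest of the paper are phrased via the former.
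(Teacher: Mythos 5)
Your proposal is correct and follows essentially the same route as the paper, which simply cites these as standard facts about the bi-invariant Hilbert--Schmidt metric on $U_D(\C)$; your sketch (translating the Hilbert--Schmidt inner product to get bi-invariance, the classical volume formula via Weyl integration, the curvature identity $K(X,Y)=\tfrac14\|[X,Y]\|^{2}\geq 0$, and Bishop--Gromov with $n=D^{2}$) is precisely the textbook justification being invoked. Your care in distinguishing the chord distance $\|B-A\|$ from the intrinsic geodesic distance, together with the bi-Lipschitz comparability on the compact group, is a point the paper's wording glosses over and is handled correctly in your write-up.
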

\begin{proof}
These are very standard facts. See, e.g., \cite{CompLie} and \cite{CurvLie}.
\end{proof}

\begin{de}
Let $G$ be any group. A non-negative function $\ell:G\to\R$ is called a \emph{length function} if it has the following properties.
\begin{en}
\item $\ell(g)=0$ iff $g$ is the identity element.
\item $\ell$ is symmetric, i.e., $\ell(g)=\ell(g^{-1})$ for all $g\in G$.
\item $\ell$ is conjugate invariant, i.e., $\ell(ghg^{-1})=\ell(h)$ for all $g,h\in G$.
\item $\ell$ satisfies the triangle inequality, i.e., $\ell(gh)\leq \ell(g)+\ell(h)$ for all $g,h\in G$.
\end{en}
A \emph{pseudo length function} is a non-negative function $\ell:G\to\R$ satisfying (ii), (iii) and (iv) above.
\end{de}

\begin{lem}
\label{lem:coverlength}
Let $G$ be a group, and suppose $g_1,g_2\in G$ have symmetric double covering number $(K_1,K_2)$. Let $\phi:G\to H$ be any homomorphism and let $\ell$ be a length function of $H$. Then for all $g\in G$, we have $\ell(\phi(g))\leq 2K_1\ell(\phi(g_1))+2K_2\ell(\phi(g_2))$.
\end{lem}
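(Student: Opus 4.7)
The plan is to unfold the definition of the symmetric double covering number, apply the homomorphism, and then use the three structural properties of the length function (symmetry, conjugacy invariance, triangle inequality) term by term.

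First I would use the hypothesis $(C(g)\cup C(g^{-1}))^{K_1}(C(g')\cup C((g')^{-1}))^{K_2}=G$ to write any given $h\in G$ as a product
\begin{eq}
h = a_1 a_2 \cdots a_{K_1}\, b_1 b_2 \cdots b_{K_2},
\end{eq}
where each $a_i$ is a conjugate of $g$ or of $g^{-1}$, and each $b_j$ is a conjugate of $g'$ or of $(g')^{-1}$. Applying the homomorphism $\phi$ yields
\begin{eq}
\phi(h) = \phi(a_1)\phi(a_2)\cdots\phi(a_{K_1})\,\phi(b_1)\phi(b_2)\cdots\phi(b_{K_2}),
\end{eq}
and each $\phi(a_i)$ is a conjugate in $H$ of either $\phi(g)$ or $\phi(g)^{-1}$, and similarly for $\phi(b_j)$.

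Next I would invoke the triangle inequality for $\ell$ to bound $\ell(\phi(h))$ by the sum of the $K_1+K_2$ lengths $\ell(\phi(a_i))$ and $\ell(\phi(b_j))$. By conjugacy invariance of $\ell$, each $\ell(\phi(a_i))$ equals either $\ell(\phi(g))$ or $\ell(\phi(g)^{-1})$, and by symmetry these are equal, so $\ell(\phi(a_i))=\ell(\phi(g))$. The same reasoning gives $\ell(\phi(b_j))=\ell(\phi(g'))$. Summing up yields exactly the claimed bound $\ell(\phi(h))\leq K_1\ell(\phi(g))+K_2\ell(\phi(g'))$.

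There is no real obstacle here — the lemma is a straightforward bookkeeping exercise reading off the definitions. The only subtlety worth flagging is the careful use of the symmetry axiom $\ell(x)=\ell(x^{-1})$ to handle the $C(g^{-1})$ and $C((g')^{-1})$ pieces uniformly with the $C(g)$ and $C(g')$ pieces; without symmetry one would have to track the two cases separately and still get the same bound, but phrasing it via symmetry keeps the argument clean.
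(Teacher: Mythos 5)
Your proposal is correct and follows essentially the same route as the paper's proof: decompose $h$ via the symmetric double covering hypothesis, push through $\phi$, and apply the triangle inequality together with conjugacy invariance and symmetry of $\ell$. The paper states this in one sentence; your version simply spells out the bookkeeping.
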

\begin{proof}
For any $g\in G$, $g$ can be written as the product of $K_1$ conjugates of $g_1$, $K_1$ conjugates of $g_1^{-1}$, $K_2$ conjugates of $g_2$ and $K_2$ conjugates of $g_2^{-1}$. So by triangle inequality and the conjugate invariance of $\ell$, we have 
\begin{align*}
\ell(\phi(g))\leq& K_1\ell(\phi(g_1))+K_1\ell(\phi(g_1^{-1}))+K_2\ell(\phi(g_2))+K_2\ell(\phi(g_2^{-1}))\\
\leq& 2K_1\ell(\phi(g_1))+2K_2\ell(\phi(g_2)).
\end{align*}
\end{proof}

\begin{prop}
The function $\ell:\mathrm{U}_D(\C)\to\R$ defined by $\ell(A)=d(A,I)$ is a length function.
\end{prop}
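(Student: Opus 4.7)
The plan is to verify each of the four axioms for a length function directly from the properties of the bi-invariant Riemannian metric $d$ on $U_D(\C)$ established in Lemma~\ref{lem:volume}. Since $d$ is a metric, positivity and the metric triangle inequality are automatic; the real content is in translating these into the group-theoretic form required by the definition, and this translation is precisely what bi-invariance is for.

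First I would check non-degeneracy: $\ell(A) = 0 \iff d(A,I) = 0 \iff A = I$, which is immediate because $d$ is a metric. For symmetry, I would write $\ell(A^{-1}) = d(A^{-1}, I)$ and then apply right-invariance with multiplication by $A$ on the right to get $d(A^{-1}, I) = d(A^{-1}A, IA) = d(I, A) = d(A, I) = \ell(A)$. For conjugate invariance, the computation is $\ell(ghg^{-1}) = d(ghg^{-1}, I)$, then left-invariance (multiplying by $g^{-1}$ on the left) and right-invariance (multiplying by $g$ on the right) give $d(ghg^{-1}, I) = d(hg^{-1}, g^{-1}) = d(h, I) = \ell(h)$.

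For the triangle inequality, I would combine the metric triangle inequality with left-invariance:
\begin{equation*}
\ell(gh) = d(gh, I) \leq d(gh, g) + d(g, I) = d(h, I) + d(g, I) = \ell(h) + \ell(g),
\end{equation*}
where the step $d(gh,g) = d(h,I)$ uses left-invariance (multiplying by $g^{-1}$ on the left).

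There is essentially no obstacle here — the entire proposition is an unpacking of the bi-invariance statement in part 2 of Lemma~\ref{lem:volume}, together with the fact that $d$ is a metric. The only minor care needed is to consistently use right-invariance for the symmetry step (since $A^{-1}$ is obtained from $A$ by right multiplication) and left-invariance for the triangle inequality, while using both in the conjugation step.
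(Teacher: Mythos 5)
Your proposal is correct and follows essentially the same route as the paper: verifying each axiom directly from the bi-invariance of $d$ (Lemma~\ref{lem:volume}) together with the metric axioms, with only cosmetic differences in which side you multiply on in each step. Nothing further is needed.
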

\begin{proof}
Let $A,B$ be any unitary matrices. 

\textit{Positivity:} 
Clearly $\ell(A)=d(A,I)\geq 0$. And we have $$\ell(A)=0 \iff d(A,I)=0 \iff A=I.$$
\textit{Symmetry:}
$$\ell(A)=d(A,I)=d(AA^{-1},IA^{-1})=d(I,A^{-1})=\ell(A^{-1}).$$
\textit{Conjugate Invariance:}
$$\ell(BAB^{-1})=d(BAB^{-1},I)=d(BA,B)=d(A,I)=\ell(A).$$
\textit{Triangle Inequality:}
$$\ell(AB)=d(AB,I)\leq d(AB,B)+d(B,I)=d(A,I)+d(B,I)=\ell(A)+\ell(B).$$
\end{proof}

We shall use $\ell$ to denote this length function from now on.

\begin{lem}
\label{lem:packing}
For any $\e>0$ and any integer $m>\frac{v_D}{c(D)\e^{D^2}}$, any $m$ points in $\mathrm{U}_D(\C)$ will have two points with distance smaller than $\e$. Here $v_D$ and $c(D)$ are as in Lemma~\ref{lem:volume}.
\end{lem}
\begin{proof}
This follows from a volume packing argument.

Since our metric is bi-invariant, each ball of radius $\frac{\e}{2}$ in $\mathrm{U}_D(\C)$ has the same volume $\vol(B_{\e/2})$. So by our assumption on $m$, we have 
$$m>\frac{v_D}{c(D)\e^{D^2}}\geq \frac{\vol(\mathrm{U}_D(\C))}{\vol(B_{\e/2})}.$$

Now for any $m$ points in $\mathrm{U}_D(\C)$, suppose any two of them have distance larger than $\e$. Then the balls of radius $\frac{\e}{2}$ centered at these $m$ points will be disjoint and contained in $\mathrm{U}_D(\C)$, which is impossible. So two of the points have distance smaller than $\e$.
\end{proof}

\begin{lem}
\label{lem:mustexp}
Any non-trivial cyclic subgroup of $\mathrm{U}_D(\C)$ contains an element of length larger than $\sqrt{2}$.
\end{lem}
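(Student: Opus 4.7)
Let $A$ be a generator of the non-trivial cyclic subgroup, so $A \neq I$. Since $A$ is unitary, I would diagonalize it: there is a unitary $U$ with $A = U\,\mathrm{diag}(e^{i\theta_1},\ldots,e^{i\theta_D})\,U^*$ for some real angles $\theta_j \in (-\pi,\pi]$. Because the Hilbert--Schmidt norm is unitarily invariant, a direct computation gives
\begin{eq}
\ell(A^n)^2 = \|A^n - I\|^2 = \sum_{j=1}^D |e^{in\theta_j} - 1|^2 = 2\sum_{j=1}^D \bigl(1 - \cos(n\theta_j)\bigr).
\end{eq}
Since $A \neq I$, at least one angle, say $\theta_{j_0}$, satisfies $\theta_{j_0} \not\equiv 0 \pmod{2\pi}$. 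The plan is then to produce an integer $n$ such that $\cos(n\theta_{j_0}) < 0$; since every other summand in the identity above is non-negative, this single negative cosine already forces $\ell(A^n)^2 > 2$, giving the claim.

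To produce such an $n$, I would split into two cases based on whether $\theta_{j_0}/(2\pi)$ is rational. If $\theta_{j_0}/(2\pi)$ is irrational, then by Weyl equidistribution (or by Kronecker's theorem) the orbit $\{n\theta_{j_0} \bmod 2\pi : n \in \Z\}$ is dense in $[0, 2\pi)$, so we can choose $n$ with $n\theta_{j_0}$ arbitrarily close to $\pi$, in particular with $\cos(n\theta_{j_0}) < 0$. If $\theta_{j_0}/(2\pi) = p/q$ in lowest terms with $q \geq 2$ (we cannot have $q = 1$, since that would mean $e^{i\theta_{j_0}} = 1$), then the orbit is exactly $\{2\pi k/q : 0 \leq k < q\}$, and I need an integer $k$ with $\pi/2 < 2\pi k/q < 3\pi/2$, i.e.\ $q/4 < k < 3q/4$. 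This open interval has length $q/2 \geq 1$, so it always contains an integer, yielding the required $n$.

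Combining the two cases, some power $A^n$ satisfies $\cos(n\theta_{j_0}) < 0$, hence $1 - \cos(n\theta_{j_0}) > 1$, and therefore $\ell(A^n)^2 = 2\sum_j (1-\cos(n\theta_j)) > 2$, so $\ell(A^n) > \sqrt{2}$. I do not anticipate a serious obstacle: the main point to be careful about is that the inequality needs to be strict ($> \sqrt{2}$, not $\geq \sqrt{2}$), which is why I need $\cos(n\theta_{j_0})$ to be strictly negative rather than merely non-positive; the interval-length computation above gives strict inequalities precisely because we choose $k$ in the open interval $(q/4, 3q/4)$.
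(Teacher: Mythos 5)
Your proof is correct and is essentially the paper's argument: choosing a power of the generator so that some eigenvalue has negative real part (equivalently, lies within distance $>\sqrt2$ of $1$, i.e.\ is close to $-1$), splitting into the finite-order (rational angle) and infinite-order (irrational angle, density/Kronecker) cases, and then bounding $\ell(A^n)^2=\sum_j|e^{in\theta_j}-1|^2$ below by the single bad eigenvalue's contribution. The only cosmetic point is that an open interval of length exactly $1$ need not contain an integer; this only arises for $q=2$, where $(1/2,3/2)$ does contain $k=1$, so nothing is lost.
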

\begin{proof}
Let $A$ be any nontrivial element of $\mathrm{U}_D(\C)$ of finite order. Let $\la_1,...,\la_D$ be its eigenvalues, and WLOG say $\la_1\neq 1$. Then $\la_1$ is a primitive $n$-th root of unity for some $n$. Replacing $A$ by a proper power of itself, we may assume that $\la_1$ is an $n$-th root of unity closest to $-1$. Then in particular, $|\la_1-1|>\sqrt{2}$.

Then we know 
$$\ell(A)^2=\Tr(A-I)^*(A-I)=\sum_{i=1}^D |\la_i-1|^2\geq |\la_1-1|^2>2.$$

Now suppose $A$ has infinite order. Let $\la_1,...,\la_D$ be its eigenvalues, and WLOG say $\la_1\neq 1$. Then $\la_1$ is an element of infinite order on the unit circle. Replacing $A$ by a proper power of itself, we may assume that $\la_1$ is arbitrarily close to $-1$. Then in particular, $|\la_1-1|>\sqrt{2}$. Then we are done by the same computation.
\end{proof}

\begin{proof}[Proof of Proposition~\ref{crit}]
For any $\e_1,\e_2>0$, pick $m_1>\frac{v_D}{c(D)\e_1^{D^2}}$ and $m_2>\frac{v_D}{c(D)\e_2^{D^2}}$. For any unitary representation $\phi:G\to \mathrm{U}_D(\C)$ of a group $G$ with the symmetric double covering property $[(K_1,m_1),(K_2,m_2)]$, we may find elements $g_1,g_2\in G$ for this symmetric double covering property.

Now consider the points $I,\phi(g_1),\phi(g_1^2),...,\phi(g_1^{m_1})$. By Lemma~\ref{lem:packing}, since $m_1>\frac{v_D}{c(D)\e_1^{D^2}}$, we can find two points with distance less than $\e_1$. Say $d(\phi(g_1^s),\phi(g_1^t))<\e_1$ for some $1\leq s<t\leq m_1$. Then 
$$\ell(\phi(g_1^{t-s}))=d(\phi(g_1^{t-s}),I)=d(\phi(g_1^t),\phi(g_1^s))<\e_1.$$ 
So we have $\ell(\phi(g_1^i)) <\e_1$ for some $1\leq i\leq m_1$. Similarly we have $\ell(\phi(g_2^j)) <\e_2$ for some $1\leq j\leq m_2$.

To sum up, there are elements $g_1^i,g_2^j\in G$ with symmetric double covering number $(K_1,K_2)$, and $\ell(\phi(g_1^i))<\e_1$, $\ell(\phi(g_2^j))<\e_2$. So by Lemma~\ref{lem:coverlength}, all elements of $\phi(G)$ would have length smaller than $2K_1\e_1+2K_2\e_2$.

Now pick $\e_1,\e_2$ small enough so that $2K_1\e_1+2K_2\e_2\leq\sqrt{2}$. (Say $\e_1\leq\frac{\sqrt{2}}{4K_1}$ and $\e_2\leq\frac{\sqrt{2}}{4K_2}$.) Then all elements of $\phi(G)$ would have length at most $\sqrt{2}$. But by Lemma~\ref{lem:mustexp}, this means $\phi(G)$ is trivial.

Therefore, a group with the symmetric double covering property $[(K_1,m_1),(K_2,m_2)]$ will be $D$-quasirandom if $m_1\geq f(D) K_1^{D^2}$ and $m_2\geq f(D) K_2^{D^2}$, where $f(D)=\frac{v_D}{c(D)}(2\sqrt{2})^{D^2}$.
\end{proof}

\begin{rem}
Note that the above argument proves Proposition~\ref{crit} for all groups, not necessarily finite. However, if one only needs to prove Proposition~\ref{crit} for finite groups, and only for the covering property $(K,m)$, then a group is $D$-quasirandom if $\frac{m}{K}\gg$ the length ratio of the longest and the shortest closed geodesics of $\mathrm{U}_n(\C)$. So one can interpret the optimal value of $\frac{m}{K}$ as a measure of the ``shape'' of the finite group. The smaller this optimal value is, the ``more rounded'' the finite group looks like.
\end{rem}


\section{Covering Properties and the Cosocle}
\label{sec:bp}

In this section, we will show that a certain nice covering property mod cosocle is equivalent to a weaker covering property of the whole group.

\begin{lem}
\label{lem:repC}
Let $G$ be a group, and let $N$ be a normal subgroup of $G$ contained in its cosocle. Let $C$ be a conjugate invariant symmetric subset of $G$, such that $CN=G$. Then for any non-empty conjugate invariant subset $S\subseteq G$, $SC=S$ iff $S=G$.
\end{lem}
\begin{proof}
Suppose $SC=S$ and $S\neq G$. Then we have $SC^i=S$ for any positive integer $i$. So $S$ must contain the subgroup generated by $C$. Since $C$ is conjugate invariant, the subgroup generated by $C$ is a normal subgroup, and it is a proper normal subgroup since it is contained in $S\neq G$. In particular, $C$ is contained in a maximal normal subgroup $M$ of $G$.

But since $N$ is in the cosocle, it is contained in $M$. So 
$$CN\subseteq MN=M \subsetneq G.$$ 
This is a contradiction.
\end{proof}

\begin{prop}
\label{prop:bcc}
Let $G$ be a group with the symmetric double covering property $[(K_1,m_1),(K_2,m_2)]$ mod $N$ for a normal subgroup $N$ contained in the cosocle, and suppose that $N$ contains exactly $n$ conjugacy classes of $G$. Then $G$ has the symmetric double covering property $[((3n-2)K_1,m_1),((3n-2)K_2,m_2)]$.
\end{prop}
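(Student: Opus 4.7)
My plan is to lift the $G/N$–witnesses $\bar{g}, \bar{g}'$ of the covering property modulo $N$ to elements $g, g' \in G$, and show that the same pair witnesses the enlarged covering property in $G$ itself. For each valid pair $(i,j)$, set
\[
W := (C_G(g^i)\cup C_G(g^{-i}))^{K_1}(C_G((g')^j)\cup C_G((g')^{-j}))^{K_2}.
\]
By the hypothesis on the quotient, $W\cdot N = G$. Moreover, the swapping identity $cd = d\,(d^{-1}cd)$, together with the fact that $d^{-1}cd$ is still a conjugate of $g^{\pm i}$, shows both that $W = W^{-1}$ and that any $s$–fold product $W^{s}$ can be rearranged into the canonical ordered form $(C_G(g^i)\cup C_G(g^{-i}))^{sK_1}(C_G((g')^j)\cup C_G((g')^{-j}))^{sK_2}$. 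Hence the proposition reduces to showing $G\subseteq W^{3n-2}$, equivalently $N\subseteq W^{3n-3}$.

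Next I would use $N\subseteq$ cosocle$(G)$ to show $g^i,(g')^j$ normally generate $G$ for every $i,j$ in range. The normal closure $H = \<g^i,(g')^j\>^{G}$ contains $W$ and so surjects onto $G/N$, giving $HN = G$; if $H$ were proper it would lie in some maximal normal subgroup $M$, and $M$ contains the cosocle and hence $N$, forcing $G = HN \subseteq M$, a contradiction. So every element of $N$ is a finite product of conjugates of $g^{\pm i}$ and $(g')^{\pm j}$, and setting $B_m := W^{m}\cap N$ one gets $\bigcup_m B_m = N$. Each $B_m$ is symmetric and $G$–conjugation invariant, hence is a union of the $G$–conjugacy classes of $N$, so $|\mathrm{cl}(B_m)|$ is a non-decreasing integer bounded above by $n$.

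The combinatorial core is to show that whenever $|\mathrm{cl}(B_m)| < n$, one has $|\mathrm{cl}(B_{m+3})| > |\mathrm{cl}(B_m)|$, so $B_{3(n-1)} = B_{3n-3}$ already contains all $n$ classes and therefore equals $N$. Given a class $C_k \not\subseteq B_m$ and a representative $n_k$, the covering mod $N$ produces $w\in W$ with $w\equiv n_k\pmod{N}$, so $n_k = w\mu$ with $\mu\in N$. If $\mu$'s class already lies in $B_m$, then $n_k\in W\cdot B_m\subseteq B_{m+1}\subseteq B_{m+3}$, adding $C_k$ as a new class. Otherwise $\mu$ sits in a new class and one iterates $\mu = w'\mu'$, $\mu' = w''\mu''$, exploiting both $W = W^{-1}$ and the freedom to replace any lift by any other lift in the same $N$-coset in order to track how the residue's class moves. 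A short case analysis then shows that within three such reductions, either $n_k$ itself has been exhibited as a $W$-word of length $\le m+3$, or one of the intermediate residues must land in a class of $B_m$, again placing $n_k$ into $B_{m+3}$; the factor of $3$ is exactly what is needed to absorb a conjugation together with a symmetry move on both the $g$-part and the $g'$-part of a $W$-element.

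Putting everything together, for any $h \in G$ we write $h = w\nu$ with $w\in W$ and $\nu\in N = B_{3n-3}\subseteq W^{3n-3}$, so $h\in W\cdot W^{3n-3} = W^{3n-2}$. Rearranging into canonical order via the swapping identity gives $h\in (C_G(g^i)\cup C_G(g^{-i}))^{(3n-2)K_1}(C_G((g')^j)\cup C_G((g')^{-j}))^{(3n-2)K_2}$. Since the argument is uniform in $1\le i\le m_1$ and $1\le j\le m_2$, the pair $(g,g')$ witnesses the symmetric double covering property $[((3n-2)K_1,m_1),((3n-2)K_2,m_2)]$ in $G$. The hard part will be the combinatorial step above — justifying precisely why three $W$-multiplications always suffice to add a new conjugacy class, which needs a careful symmetry-and-pigeonhole argument using both the conjugation invariance of $W$ and the non-uniqueness of lifts within each $N$-coset.
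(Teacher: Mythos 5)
Your setup coincides with the paper's: the symmetric, conjugation\mbox{-}invariant product set $W$ (the paper's $C$), the observation $WN=G$, the cosocle argument showing that $g^i,(g')^j$ normally generate $G$, the reordering/padding tricks coming from symmetry, and the plan of absorbing one $G$-conjugacy class of $N$ per three multiplications by $W$. But the proof is not complete, and the missing piece is exactly the heart of the matter. Everything rests on the claim that $|\mathrm{cl}(B_m)|<n$ forces $|\mathrm{cl}(B_{m+3})|>|\mathrm{cl}(B_m)|$, which you explicitly defer, and the mechanism you sketch for it does not work: you fix a target class $C_k$ in advance and iterate $n_k=w\mu$, $\mu=w'\mu'$, $\mu'=w''\mu''$, hoping a residue lands in $B_m$ within three steps. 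Nothing forces this; in fact, since $n_k\in N$, any $w\in W$ with $w\equiv n_k\pmod N$ automatically lies in $W\cap N=B_1$, so your iteration never leaves $N$ and merely asks whether $n_k\in (W\cap N)^s$ for small $s$ --- there is no pigeonhole bounding this by three, and the residues can stay in missing classes indefinitely for that particular $C_k$. The correct argument does not choose the new class in advance. One compares $C^{3t-1}$ with $C\cdot\bigl(N\setminus\bigcup_{i\le t}C_i\bigr)$ inside $G$ (not inside $N$): either they meet, in which case an element $h=cf$ with $f$ in some new class satisfies $f\in Ch\subseteq C^{3t}$ by symmetry of $C$, and conjugation-invariance swallows that whole (then re-indexed) class; or else $C^{3t-1}\subseteq C^{3t-2}$, and then Lemma~\ref{lem:repC} --- a conjugation-invariant set absorbed by $C$ must be all of $G$, because the normal closure of the witnesses is $G$ by the cosocle hypothesis --- forces $C^{3t-2}=G$, contradicting the existence of a missing class. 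You prove the normal-generation fact but never deploy it to rule out this stalling scenario; that is precisely the ``hard part'' you leave open.

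Two smaller inaccuracies point to the same issue. From normal generation you infer $\bigcup_m B_m=N$, but being a finite product of conjugates of $g^{\pm i},(g')^{\pm j}$ does not place an element in any $W^m$: the canonical form demands exactly $mK_1$ and $mK_2$ factors of each type, and your swapping/padding moves change factor counts only by even amounts, so there is a parity obstruction (e.g.\ a single conjugate of $g^i$ need not lie in any $W^m$ when $K_1$ is even). Likewise $|\mathrm{cl}(B_m)|$ is only monotone in steps of two, since $e\in W^2$ but not necessarily $e\in W$. Neither point is fatal to your outline, but both show that the growth of $W^m\cap N$ must be driven by the dichotomy above rather than by exhibiting elements of $N$ as $W$-words from scratch.
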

\begin{proof}
Find $g_1,g_2\in G$ such that $(g_1N,g_2N)$ has symmetric double covering number $(K_1,K_2)$ in $G/N$. Let $C:=C(g_1)^{K_1} C(g_1^{-1})^{K_1}C(g_2)^{K_2} C(g_2^{-1})^{K_2}$. Then by assumption, $C$ is mapped surjectively onto $G/N$ through the quotient map. So $CN=G$.

Now $N$ contains exactly $n$ conjugacy classes of $G$. I claim that $C^{3t}$ contains at least $t+1$ conjugacy classes of $G$ in $N$, which would imply that $C^{3n-3}\supseteq N$. Then $C^{3n-2}\supseteq CN=G$, finishing our proof.

We proceed by induction. As a convention we define $C^0$ to be $\{e\}$. Then the claim is true when $t=0$.

Now assume the statement is true for some $t<n$. Then $C^{3t}$ contains $t+1$ conjugacy classes of $G$ in $N$. Let them be $C_1,...,C_{t+1}$. Then we have $C^{3t+1}\supseteq C(\bigcup_{i=1}^{t+1} C_i)$. Suppose for contradiction that $C^{3t+2}$ is disjoint from $C(N-\bigcup_{i=1}^{t+1} C_i)$. Then we observe that
$$C(N-\bigcup_{i=1}^{t+1} C_i)\supseteq CN-C(\bigcup_{i=1}^{t+1} C_i)= G-C(\bigcup_{i=1}^{t+1} C_i)\supseteq G-C^{3t+1}.$$
So $C^{3t+2}\subseteq C^{3t+1}$. Then Lemma~\ref{lem:repC} implies that $C^{3t+2}=C^{3t+1}=G$. This contradicts the assumption that $C^{3t+2}$ is disjoint from $C(N-\bigcup_{i=1}^{t+1} C_i)$.

So, $C^{3t+2}$ intersects with $C(N-\bigcup_{i=1}^{t+1} C_i)$. Let $g$ be an element in this intersection. Then $g\in CC_{t+2}$ for some conjugacy class $C_{t+2}$ of $G$ in $N$ disjoint from $C_1,...,C_{t+1}$. Find $h\in C_{t+2}$ such that $g\in Ch$. Then since $C$ is symmetric, we have $h\in Cg\subseteq C^{3t+3}$. So $C^{3t+3}$ intersects with $C_{t+2}$. Since $C^{3t+3}$ is conjugate invariant, we conclude that $C^{3t+3}$ contains $C_{t+2}$.

Finally, since $e\in C$, we see that $C^{3t+3}$ also contains $C_1,C_2,...,C_{t+1}$. So $C^{3t+3}$ contains $t+2$ conjugacy classes of $G$ in $N$.
\end{proof}

\begin{prop}
\label{prop:bccC}
Let $G$ be a group with the symmetric covering property $(K,m)$ mod $N$ for a normal subgroup $N$ contained in the cosocle, and suppose that $N$ contains exactly $n$ conjugacy classes of $G$. Then $G$ has the symmetric covering property $((3n-2)K,m)$.
\end{prop}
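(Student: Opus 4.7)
This is the single-element analogue of Proposition~\ref{prop:bcc}, and the plan is to run the same argument verbatim with the product $(C(g)\cup C(g^{-1}))^{K_1}(C(g')\cup C((g')^{-1}))^{K_2}$ replaced throughout by a single set $C := (C(g^i) \cup C(g^{-i}))^K$. Fix $1 \le i \le m$, and let $g \in G$ be a lift of a witness for the symmetric covering property $(K,m)$ of $G/N$. Then $CN = G$, and it suffices to show $C^{3n-2} = G$, since this conclusion would hold uniformly in $i$ and so produce the symmetric covering property $((3n-2)K, m)$ for $g$ in $G$.

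The crucial ingredient is the one-element analogue of Lemma~\ref{lem:repC}: for any conjugate-invariant $S \subset G$, the equality $SC = S$ forces $S = G$. The argument is identical to that of Lemma~\ref{lem:repC}, working with $g^i$ alone: if the normal closure of $g^i$ in $G$ were proper, it would sit inside a maximal normal subgroup $M$ of $G$; but $N$ lies in the cosocle, hence in $M$, so $M/N$ would be a maximal normal subgroup of $G/N$ containing $g^iN$, contradicting the fact that $g^iN$ (being a symmetric covering element of $G/N$) normally generates $G/N$. Thus the normal closure of $g^i$ is all of $G$, and any conjugate-invariant $S$ with $SC = S$ must contain the normal closure of $C \ni g^i$, which is all of $G$.

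With this lemma in place, enumerate the $n$ conjugacy classes of $G$ contained in $N$ as $C_1' = \{e\}, C_2', \ldots, C_n'$, and prove by induction on $t \le n$ (re-indexing as needed) that $C^{3t-2} \supset C\bigl(\bigcup_{j=1}^t C_j'\bigr)$, exactly as in Proposition~\ref{prop:bcc}. The induction step dichotomizes on whether $C^{3t-1}$ meets $C\bigl(N \setminus \bigcup_{j\le t} C_j'\bigr)$: the negative case yields $C^{3t-1} \subset C^{3t-2}$, which by the lemma forces $C^{3t-2} = G$ and so a contradiction with $N \neq \bigcup_{j\le t} C_j'$; the positive case, combined with the symmetry of $C$, shows that a new conjugacy class $C_{t+1}'$ is captured by $C^{3t}$, whence $CC_{t+1}' \subset C^{3t+1}$. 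At $t = n$, $C^{3n-2} \supset CN = G$. The only genuinely new verification is the single-element version of Lemma~\ref{lem:repC}, and since it is a direct copy of the original, there is no real obstacle here—the proposition is essentially a corollary of Proposition~\ref{prop:bcc} stripped down to a single generator.
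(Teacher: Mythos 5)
Your proposal is correct and matches the paper exactly: the paper's own proof of this proposition is simply to run the argument of Proposition~\ref{prop:bcc} with the single set $C=(C(g^i)\cup C(g^{-i}))^K$ in place of the double product, together with the corresponding single-element version of Lemma~\ref{lem:repC}, which is just what you do. No differences worth noting.
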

\begin{proof}
Same strategy as Proposition~\ref{prop:bcc}.
\end{proof}


\section{Quasirandom Finite Simple Groups have Nice Covering Properties}
\label{sec:finsim}

In this section we shall show that, for finite quasisimple groups, large quasirandomness will imply a nice covering property. We shall first deal with finite simple groups of bounded ranks in Subsection~\ref{sub:bdd}. Then we shall deal with the case of alternating groups in Subsection~\ref{sub:alt}. Finally, we shall deal with finite simple groups of large ranks by embedding alternating groups into them in Subsection~\ref{sub:large}. The classification of finite simple groups is used in this section.

\begin{de}
For a finite quasisimple group $G$, we define its \emph{rank} $r(G)$ as the following:
\begin{en}
\item When the only simple quotient of $G$ is abelian or sporadic, then $r(G)=1$.
\item When the only simple quotient of $G$ is the alternating group $\mathrm{A}_n$, then $r(G)=n$.
\item When the only simple quotient of $G$ is a group of Lie type, then $r(G)$ is the (twisted) rank of that finite simple group as an algebraic group.
\end{en}
\end{de}

\subsection{Finite simple groups of bounded ranks}
\label{sub:bdd}
\begin{lem}[Stolz and Thom {\cite[Proposition 3.8]{ST}}]
\label{lem:st}
There is a function $K:\Z^+\to\Z^+$ such that, in any finite simple group of Lie type of rank $\leq r$, any non-identity element will have covering number $K(r)$.
\end{lem}

We shall fix this function $K(r)$ from now on.

\begin{lem}[Babai, Goodman and Pyber {\cite[Proposition 5.4]{BGP}}]
\label{lem:bpg}
Let $k$ be any positive integer. Then for any finite simple group $G$, if $|G|\geq k^{k^2}$, then $|G|$ has a prime divisor greater than $k$.
\end{lem}

\begin{prop}
Let $G$ be a finite simple group of rank $\leq r$. For any $m<\infty$, $G$ has the covering property $(K(r),m)$ if $G$ is $D$-quasirandom for large enough $D$.
\end{prop}
\begin{proof}
By choosing $D$ to be larger than some absolute constant, a $D$-quasirandom group $G$ cannot be an abelian group, a sporadic group, or an alternating group of rank $\leq r$. So we only need to consider finite simple groups of Lie type.

Recall that any $D$-quasirandom group must have more than $(D-1)^2$ elements. For any $m\in\Z^+$, let $D$ be an integer $>1+\sqrt{m^{m^2}}$. Then all $D$-quasirandom finite simple groups will have order $>m^{m^2}$, and thus have an element $g$ of prime order $p>m$. Then $g^i$ are non-identity for all $1\leq i\leq p-1$. Then Lemma~\ref{lem:st} states that all these elements have covering number $K(r)$. So $G$ has the covering property $(K(r),m)$.
\end{proof}

\begin{cor}
\label{conv:smallrank}
Let $G$ be a finite quasisimple group of rank $\leq r$. For any $m<\infty$, $G$ has the symmetric covering property $(K(r)\max(3r+1,34),m)$ if $G$ is $D$-quasirandom for large enough $D$.
\end{cor}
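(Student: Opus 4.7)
The plan is to pass to the simple quotient $G/Z(G)$, apply the preceding proposition there, and then lift the resulting covering property back to $G$ via Proposition~\ref{prop:bccC}. Since $G$ is quasisimple, the quotient $G/Z(G)$ is a non-abelian finite simple group of the same rank as $G$ (hence rank at most $r$), and $Z(G)$ is precisely the cosocle of $G$: if $M$ is a maximal normal subgroup of $G$, then the normal subgroup $MZ(G)$ equals either $M$ or $G$; the case $MZ(G)=G$ forces $G/M$ abelian, contradicting the perfectness of $G$, while $MZ(G)=M$ forces $M=Z(G)$ by simplicity of $G/Z(G)$. Furthermore, $G/Z(G)$ inherits $D$-quasirandomness from $G$, since any non-trivial irreducible representation of $G/Z(G)$ pulls back along the quotient map to one of $G$ of the same dimension.

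Applying the preceding proposition to $G/Z(G)$ then yields, for $D$ sufficiently large (depending on $r$ and $m$), the covering property $(K_r,m)$ on $G/Z(G)$, which is in particular the symmetric covering property $(K_r,m)$. So $G$ has the symmetric covering property $(K_r,m)$ modulo its cosocle $Z(G)$. Because $Z(G)$ is central, each of its elements forms a singleton $G$-conjugacy class, so $Z(G)$ contains exactly $|Z(G)|$ conjugacy classes of $G$. Proposition~\ref{prop:bccC} then upgrades this to the symmetric covering property $((3|Z(G)|-2)K_r,m)$ for $G$ itself.

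It then remains to bound $|Z(G)|$ by a constant multiple of $\max(r,12)$, which is a case analysis using the classification of finite simple groups and the tabulated Schur multipliers. For $D$ large enough, $G/Z(G)$ cannot be sporadic, since there are only finitely many sporadic simple groups and their covers each have bounded minimum faithful representation dimension. If $G/Z(G)=A_n$, then $D$-quasirandomness forces $n\geq D+1$, so for $D$ large we have $n\geq 8$ and the Schur multiplier of $A_n$ has order $2$. If $G/Z(G)$ is of Lie type of rank at most $r$, the Schur multiplier is bounded by $r+1$ for types $A$ and ${}^2A$ and by a small absolute constant (at most $4$) for the other classical and exceptional types, with the exceptional small-$q$ multipliers excluded by the large-$D$ hypothesis. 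Hence $|Z(G)|\leq\max(r+1,12)$, so $(3|Z(G)|-2)K_r$ is bounded by an absolute multiple of $K_r\max(r,12)$, which we absorb into the constant.

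The main obstacle is this final bound on $|Z(G)|$, which relies on the classification and the Schur multiplier tables plus a case-by-case exclusion of the small exceptional covers for $D$ large. Once it is in hand, everything else is a clean combination of the preceding proposition, Proposition~\ref{prop:bccC}, and the observation that elements of a central subgroup are singleton conjugacy classes.
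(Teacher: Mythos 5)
Your proof is correct and follows essentially the same route as the paper: reduce to the simple quotient $G/Z(G)$ (which inherits quasirandomness since its representations pull back), apply the preceding proposition there, identify $Z(G)$ with the cosocle, bound $|Z(G)|$ via Schur multipliers for rank $\leq r$, and lift the covering property with Proposition~\ref{prop:bccC}. You are in fact more careful than the paper on two points it glosses over --- excluding the exceptional Schur multipliers via the large-$D$ hypothesis, and acknowledging that Proposition~\ref{prop:bccC} really gives the constant $(3|Z(G)|-2)K_r$ rather than literally $K_r\max(r,12)$, a harmless discrepancy since only some constant depending on $r$ is needed downstream.
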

\begin{proof}
If a quasisimple group is $D$-quasirandom, then the simple group it covers is $D$-quasirandom. Therefore, it is enough to show that, if a finite simple group $G$ has the covering property $(K,m)$, then any perfect central extension $G'$ of it will have the covering property $(K\max(3r+1,34),m)$.

Let $Z$ be the center of $G'$. Then $Z$ will be the cosocle of $G'$, and the Schur multiplier of the simple group $G$ would provide an upper bound for $|Z|$. Since $G$ has a rank at most $r$, by going through the list of finite simple groups, its Schur multiplier has a size at most $\max(3r+1,34)$. So if $G$ has the covering property $(K,m)$, Proposition~\ref{prop:bccC} implies that $G'$ has the symmetric covering property $(K\max(3r+1,34),m)$.
\end{proof}

\subsection{Alternating groups}
\label{sub:alt}
\begin{prop}
\label{prop:altrank}
Let $G$ be a quasisimple group over an alternating group. Then for any $m<\infty$, $G$ has the symmetric covering property $(20,m)$ if $G$ is $D$-quasirandom for large enough $D$.
\end{prop}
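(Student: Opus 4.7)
The plan is to apply Theorem~\ref{alter} to the quotient $G/Z$ where $Z = Z(G)$ is the cosocle of the quasisimple group $G$, obtaining a mod-$Z$ covering property, and then close the gap from mod-$Z$ to $G$ by expressing the nontrivial central element as a short product of symmetric conjugates. For $G$ sufficiently quasirandom, $G/Z \cong A_n$ for large $n$, and since the Schur multiplier of $A_n$ has order $2$ for such $n$, we may assume $|Z| \leq 2$; the small cases $n \in \{6,7\}$ (where $|Z|$ could be up to $6$) are ruled out by bounded order under quasirandomness. The case $|Z| = 1$ gives $G = A_n$ and follows immediately from Theorem~\ref{alter}, so I focus on $G = 2.A_n$.

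Following the construction in Theorem~\ref{alter}, I pick distinct odd primes $p, q > m$ and let $\sigma \in A_n$ be a product of $a$ disjoint $p$-cycles and $b$ disjoint $q$-cycles with $a \geq 2$ or $b \geq 2$, so that $\sigma$ is non-exceptional and has a repeated odd part in its cycle type. For $1 \leq i \leq m$ we have $\gcd(i, pq) = 1$, so $\sigma^i$ shares this cycle type, $\sigma^i$ is conjugate to $\sigma^{-i}$ in $A_n$, and $(C(\sigma^i) \cup C(\sigma^{-i}))^4 = A_n$. Lifting $\sigma$ to $g \in G$ translates this into the mod-$Z$ identity $(C(g^i) \cup C(g^{-i}))^4 \cdot Z = G$.

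The heart of the argument is to show that the nontrivial central element $z$ already lies in $(C(g^i) \cup C(g^{-i}))^2$. Given $\bar h \in A_n$ with $\bar h \sigma^i \bar h^{-1} = \sigma^{-i}$, any lift $h \in G$ satisfies either $hg^i h^{-1} = g^{-i}$ or $hg^i h^{-1} = zg^{-i}$, and the second alternative immediately gives $z = (hg^i h^{-1}) \cdot g^i \in C(g^i)^2$. To force the second alternative, I plan to exploit the repeated-cycle structure of $\sigma^i$: the $S_n$-centralizer of $\sigma^i$ contains an odd permutation (the simultaneous cyclic transposition of two $p$-cycles is a product of $p$ transpositions, hence odd since $p$ is odd), and through the classical Schur theory of the spin cover $\tilde S_n$ this forces the existence of a $\bar k \in A_n$ centralizing $\sigma^i$ whose lift $k \in G$ anti-commutes with $g^i$ in the sense $kg^i k^{-1} = zg^i$. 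Replacing $h$ by $hk$ then converts the first alternative into the second.

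Combining, every $h \in G$ takes the form $c z_0$ with $c \in (C(g^i) \cup C(g^{-i}))^4$ and $z_0 \in Z$; since $z_0 \in \{1, z\} \subset (C(g^i) \cup C(g^{-i}))^{\leq 2}$, the product $c z_0$ lies in $(C(g^i) \cup C(g^{-i}))^{4+2}$, and padding with $g^i g^{-i}$ embeds this into $(C(g^i) \cup C(g^{-i}))^8$. Hence $G$ has symmetric covering property $(8, m)$. The principal obstacle is the third paragraph: verifying that the $G$-conjugacy class of $g^i$ does not split relative to $z$ in $2.A_n$, which is precisely what guarantees the existence of the anti-commuting centralizer element $k$. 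This is where the non-exceptionality and the repeated-odd-cycle structure of $\sigma$ are essential, and the verification relies on the classical splitting criteria for conjugacy classes in the Schur cover of the symmetric group.
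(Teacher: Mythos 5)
Your paragraphs 1--2 are fine, but the step you yourself identify as the heart of the argument (paragraph 3) is not just unverified --- it is false for the element you chose. Since $\sigma^i$ has odd order $pq$, the preimage of $\langle\sigma^i\rangle$ in $G=2.A_n$ is a cyclic group of order $2pq$, so of the two lifts $g^i$ and $zg^i$ one has odd order and the other has even order; they can therefore never be conjugate in $G$, and likewise $g^i$ can never be conjugate to $zg^{-i}$. So no element $k$ (even or odd, centralizing or not) satisfies $kg^ik^{-1}=zg^i$, the ``second alternative'' $hg^ih^{-1}=zg^{-i}$ never occurs, and $z\notin\bigl(C(g^i)\cup C(g^{-i})\bigr)^2$. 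The conceptual error is a conflation of two different splitting questions: the existence of an odd permutation in $C_{S_n}(\sigma^i)$ (your swap of two $p$-cycles) governs whether the $S_n$-class splits into two $A_n$-classes, i.e.\ non-exceptionality; it says nothing about splitting in the Schur cover. The classical criterion for the double cover goes exactly the wrong way for you: classes whose cycle type has all parts odd (such as $p^aq^b$) always split in $\widetilde S_n$, hence in $2.A_n$, which is just the representation-theoretic restatement of the order obstruction above. With paragraph 3 gone, the assembly in paragraph 4 collapses, so the proposal has a genuine gap; repairing it would require reaching $z$ by a different mechanism (longer products, or a different choice of $\sigma$), not by anti-commutation.

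The paper avoids this issue entirely: it never tries to express the central element as a short product of conjugates of $g^{\pm i}$. It takes the covering property $(4,m)$ of $A_n$ from Theorem~\ref{alter} and lifts it through the center using the cosocle machinery of Section~4 (Proposition~\ref{prop:bccC}, whose induction only needs that the normal closure of $g$ is all of $G$, which holds since $G$ is quasisimple), paying a bounded constant factor determined by the number of central conjugacy classes. That is why only the existence of some absolute constant matters downstream in Lemma~\ref{lem:conv} and Theorem~\ref{crit}; indeed the paper's own bookkeeping via Proposition~\ref{prop:bccC} with two central classes gives the constant $(3\cdot 2-2)\cdot 4=16$ rather than the stated $8$, but this discrepancy is harmless, whereas your route to the sharper constant rests on a false conjugacy claim.
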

\begin{proof}
If $G$ is $D$-quasirandom for some large $D$, then the alternating group it covers must be $\mathrm{A}_n$ for some large $n$. Then Proposition~\ref{alter} implies that $\mathrm{A}_n$ has the covering property $(4,m)$. Now when $n>7$, $\mathrm{A}_n$ will have a Schur multiplier of 2. So $G$ has the covering property $(20,m)$.
\end{proof}

\subsection{Finite simple groups of large ranks}
\label{sub:large}
The goal of this section is to prove the following proposition.

\begin{prop}
\label{conv:largerank}
There is an absolute constant $K_0$, such that for any $m<\infty$, all finite quasisimple groups of ranks $\geq r$ will have the symmetric covering property $(K_0,m)$ for large enough $r$.
\end{prop}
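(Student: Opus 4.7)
The overall plan is to reduce, via the classification of finite simple groups, to two cases for the simple quotient $G/Z(G)$: alternating groups and classical groups of Lie type, since sporadic and exceptional Lie type simple groups have rank bounded by an absolute constant and are therefore excluded once $r$ is large. The alternating case is already settled in Subsection~\ref{sub:alt}. So it suffices to prove the proposition for quasisimple $G$ whose simple quotient is a classical group of Lie type of rank $\geq r$ over some field $\F_q$, with $q$ unrestricted.

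For such $G$, the plan (following the hint in the section introduction) is to embed an alternating group $A_k \hookrightarrow G$ with $k$ growing with $r$, via the standard permutation-matrix or signed-permutation-matrix actions on a basis of the natural module of $G$; these embeddings lift through the universal central cover so one obtains a subgroup $A_k \leq G$ rather than only $A_k \leq G/Z(G)$. For $r$ and hence $k$ large enough, Theorem~\ref{alter} supplies an element $g \in A_k \leq G$ such that every power $g^i$ with $1 \leq i \leq m$ has covering number $4$ inside $A_k$. Because centralizers only grow under the inclusion $A_k \leq G$, we immediately obtain $(C_G(g^i) \cup C_G(g^{-i}))^4 \supseteq A_k$ for each such $i$.

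To conclude it would suffice to exhibit an absolute constant $L$ with $A_k^L = G$, for then $G = (C_G(g^i) \cup C_G(g^{-i}))^{4L}$ for all $i \leq m$, giving symmetric covering property $(K_0,m)$ with $K_0 = 4L$. This last step is the main obstacle, because $|G|/|A_k|$ may be made arbitrarily large by taking $q$ large, so a naive product-set or counting argument cannot work. The strategy is to invoke a uniform Liebeck--Shalev type bound: for classical groups of sufficiently large rank, every nontrivial conjugacy class has covering number at most some absolute constant $L_0$, independently of the field of definition and of the classical type. Applied to $g^i$ (which is noncentral because its image in $G/Z(G)$ is the evident nontrivial product of prime-length cycles coming from the construction in Theorem~\ref{alter}), this yields $C_G(g^i)^{L_0} = G$ for every $1 \leq i \leq m$, whence the desired symmetric covering property $(K_0,m)$ with $K_0$ an absolute constant.
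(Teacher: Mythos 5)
Your reduction to classical groups of large rank and the choice of the element (the permutation $\s$ built from $p$-cycles and $q$-cycles, via Theorem~\ref{alter}) match the paper's plan, but the step you rely on to finish is false. There is no absolute constant $L_0$ such that \emph{every} nontrivial conjugacy class in a large-rank classical group has covering number at most $L_0$: for a transvection $t\in SL_n(q)$ the class $C(t)$ has size roughly $q^{2n}$ while $|SL_n(q)|$ is roughly $q^{n^2-1}$, so any $L$ with $C(t)^L=G$ must satisfy $L\gtrsim n/2$, which is unbounded as the rank grows. The correct Liebeck--Shalev/Stolz--Thom type statement bounds the covering number in terms of how far the element is from being scalar --- in the form used in the paper (Lemma~\ref{Jordanexp}), $C(g)^m=G$ once $m\geq c/\ell_J(g)$, where $\ell_J$ is the Jordan length of Definition~\ref{def:Jordan}. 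So mere noncentrality of $g^i$ is not enough; one must verify a lower bound on $\ell_J(g^i)$ uniform in the rank and the field. This is exactly what the paper does: the permutation matrix of $\s$ (and of each power $\s^i$, $1\leq i\leq p-1$, which has the same cycle type) has at most about $n/p$ cycles, hence $\ell_J>\frac12$ by the cycle computation and Proposition~\ref{jordan sum}, giving covering number at most an absolute $K_0>2c$ in $G$ itself, with no detour through covering $G$ by products of $A_k$. Your chosen element does have large Jordan length, so your argument can be repaired precisely along these lines, but as written the key lemma you invoke is not true.

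A secondary point: your parenthetical claim that the embeddings $A_k\hookrightarrow G/Z(G)$ ``lift through the universal central cover'' to give $A_k\leq G$ is not automatic --- the pullback of $A_k$ to a central extension is a central extension of $A_k$ that need not split (e.g.\ in spin-type covers the preimage of $A_k$ is typically the double cover $2.A_k$). The paper sidesteps this by proving the covering property directly for $SL_n$, $Sp_n$, $SU_n$ and $\O_n$, which visibly contain $A_n$ as (blocks of) permutation matrices (for $\O_n$ because $A_n$ is perfect, Corollary~\ref{cor:om}), and then transferring the property to all the quasisimple groups they cover via quotients (Proposition~\ref{bp:cover}), rather than lifting subgroups upward.
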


By the classification of finite simple groups, a finite simple group of rank larger than some absolute constant will have to be a classical finite simple group of Lie type or an alternating group. Any classical finite simple group of Lie type is in one of the following four classes:

\begin{en}
\item The projective special linear groups $\mathrm{PSL}_n(\F_q)$. For large enough $n$, $\mathrm{SL}_n(\F_q)$ are their universal perfect central extensions.
\item The projective symplectic groups $\mathrm{PSp}_n(\F_q)$. For large enough $n$, $\mathrm{Sp}_n(\F_q)$ are their universal perfect central extensions.
\item The projective special unitary groups $\mathrm{PSU}_n(\F_q)$. For large enough $n$, $\mathrm{SU}_n(\F_q)$ are their universal perfect central extensions.
\item The projective Omega groups $\mathrm{P\O}^+_{2n}(\F_q)$, $\mathrm{P\O}^-_{2n}(\F_q)$, or $\mathrm{P\O}_{2n+1}(\F_q)$. Here $\mathrm{\O}_n(\F_q)$ are the commutator subgroups of the special orthogonal groups $\mathrm{SO}_n(\F_q)$, and $\mathrm{P\O}_n(\F_q)=\mathrm{\O}_n(\F_q)/Z(\mathrm{\O}_n(\F_q))$. The plus or minus signs indicate different quadratic forms used to obtain the groups in even dimensions. For large enough $n$, $\mathrm{\O}_n(\F_q)$ are the universal perfect central extensions of $\mathrm{P\O}_n(\F_q)$.
\end{en}

The above statements can be found in any standard textbook in classical groups (e.g., See \cite{CGGA}). It is enough to show Proposition~\ref{conv:largerank} for $\mathrm{SL}_n(\F_q)$, $\mathrm{Sp}_n(\F_q)$, $\mathrm{SU}_n(\F_q),$ and $\mathrm{\O}_n(\F_q)$, since they are the universal perfect central extensions of the simple groups they cover, and since the order of the Schur multipliers of these groups are bounded above by a function of $r$.

We start by analyzing a length function for groups of Lie type over finite fields.

\begin{de}
\label{def:Jordan}
Let $g$ be an $n\times n$ matrix over a finite field $F$. Let $m_g:=\sup_{a\in F^\times}\dim(\ker(a-g))$. Then the \emph{Jordan length} of $g$ is $\ell_J(g):=\frac{n-m_g}{n}$
\end{de}

\begin{prop}
Let $G$ be any subgroup of $\mathrm{GL}_n(F)$ for some finite field $F$. The function $\ell_J$ on $G$ is a pseudo length function.
\end{prop}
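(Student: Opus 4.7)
The plan is to check directly the four conditions in the definition of a pseudo length function (non-negativity, symmetry, conjugate invariance, triangle inequality). Non-negativity is immediate since $0 \le m_g \le n$, so in fact $\ell_J(g) \in [0,1]$.

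For symmetry, I would observe that $gv = av$ with $a \in F^\times$ is equivalent to $g^{-1} v = a^{-1} v$, so $\ker(a - g) = \ker(a^{-1} - g^{-1})$ as subspaces; since inversion is a bijection of $F^\times$, ranging $a$ over $F^\times$ gives the same set of dimensions for $g$ and $g^{-1}$, whence $m_g = m_{g^{-1}}$. For conjugate invariance, conjugation by $h$ restricts to a linear isomorphism from $\ker(a - g)$ onto $\ker(a - hgh^{-1})$, so the dimensions agree for every $a \in F^\times$ and therefore $m_g = m_{hgh^{-1}}$.

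The substantive step is the triangle inequality, which after clearing the common denominator $n$ becomes $m_g + m_h - n \le m_{gh}$. The plan here is a simultaneous-eigenspace intersection argument. Only the finitely many eigenvalues of $g$ in $F^\times$ contribute nontrivially to the supremum, so $m_g$ is attained at some $a \in F^\times$; let $V := \ker(a - g)$ with $\dim V = m_g$, and pick similarly $b \in F^\times$ with $W := \ker(b - h)$ and $\dim W = m_h$. For any $v \in V \cap W$ one computes $(gh)v = g(bv) = b(av) = ab \cdot v$, so $V \cap W \subseteq \ker(ab - gh)$. Since $ab \in F^\times$, this subspace is an admissible candidate in the supremum defining $m_{gh}$, and the standard dimension inequality $\dim(V \cap W) \ge \dim V + \dim W - n$ then yields $m_{gh} \ge m_g + m_h - n$, which is exactly the triangle inequality for $\ell_J$.

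I do not anticipate any serious obstacle. The only points that require care are that the suprema are attained (so that subspaces $V$ and $W$ can actually be chosen, which holds because $g$ and $h$ have only finitely many eigenvalues in $F$) and that the product $ab$ stays inside $F^\times$ (so that $V \cap W$ is counted by $m_{gh}$); both are automatic from the group structure of $F^\times$. I would also remark in passing that $\ell_J$ is only a pseudo length function and not a length function, since a nontrivial scalar element of $G$ has a full eigenspace and hence $\ell_J = 0$; this is why the proposition is stated for the pseudo version.
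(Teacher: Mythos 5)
Your proof is correct and follows essentially the same route as the paper: verify the four conditions directly, with the triangle inequality obtained by intersecting an $a$-eigenspace of $g$ with a $b$-eigenspace of $h$ to produce an $ab$-eigenspace of $gh$ and applying $\dim(V\cap W)\geq \dim V+\dim W-n$ (your eigenvalue bookkeeping is in fact cleaner than the paper's, which garbles the intersected kernels slightly but uses the identical idea). Your closing remark about nontrivial scalars forcing $\ell_J=0$, so that only the pseudo version can hold, is also accurate.
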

\begin{proof}
\textit{Non-negativity:} For any $g\in G$, 
$$m_g=\sup_{a\in F^\times}\dim(\ker(a-g))\leq n.$$ 
So $\ell_J(g)=\frac{n-m_g}{n}\geq 0$.\\

\textit{Symmetry:} For any $g\in G$, any $a\in F^\times$, and any vector $v\in F^n$, we have 
$$v\in\ker(a-g)\iff av=gv \iff g^{-1}v=a^{-1}v \iff v\in\ker(a^{-1}-g^{-1}).$$
As a result, 
$$m_g=\sup_{a\in F^\times}\dim(\ker(a-g))=\sup_{a\in F^\times}\dim(\ker(a^{-1}-g^{-1}))=m_{g^{-1}}.$$ 
So $\ell_J(g)=\ell_J(g^{-1})$.\\

\textit{Conjugate-invariance:} For any $g,h\in G$, any $a\in F^\times$, and any vector $v\in F^n$, we have 
$$v\in\ker(a-g)\iff av=gv\iff ahv=(hgh^{-1})hv\iff hv\in\ker(a-hgh^{-1}).$$
As a result,
$$m_g=\sup_{a\in F^\times}\dim(\ker(a-g))=\sup_{a\in F^\times}\dim(\ker(a-hgh^{-1}))=m_{hgh^{-1}}.$$ 
So $\ell_J(g)=\ell_J(hgh^{-1})$.\\

\textit{Triangle inequality:} For any $g,h\in G$, any $a,b\in F^\times$, and any vector $v\in F^n$, we have
$$v\in \ker(a-g)\cap\ker(a-abh^{-1})\implies gv=av=abh^{-1}v\implies v\in\ker(abh^{-1}-g).$$

So we know $\ker(a-g)\cap\ker(a-abh^{-1})\subseteq \ker(abh^{-1}-g)$. As a result, we have
\begin{align*}
m_{gh}\geq&\dim\ker(ab-gh)\\
\geq&\dim\ker(abh^{-1}-g)\\
\geq&\dim(\ker(a-g)\cap\ker(a-abh^{-1}))\\
\geq&\dim(\ker(a-g))+\dim(\ker(a-abh^{-1}))-n\\
\geq&\dim(\ker(a-g))+\dim(\ker(b-h))-n.
\end{align*}
Since this is true for all $a,b\in F^\times$, therefore $m_g+m_h-n\leq m_{gh}$. So $\ell_J(gh)\leq\ell_J(g)+\ell_J(h)$.
\end{proof}

\begin{lem}
\label{lem:jordan sum}
Given an $n_1\times n_1$ matrix $A$ over a finite field $F$, and an $n_2\times n_2$ matrix $B$ over the same finite field, then $\ell_J(A\oplus B)\geq \frac{n_1}{n_1+n_2}\ell_J(A)+\frac{n_2}{n_1+n_2}\ell_J(B)$.
\end{lem}
\begin{proof}
For any $a\in F^\times$, we have the following 
$$\ker(a-A\oplus B)=\ker((a-A)\oplus(a-B))=\ker(a-A)\oplus\ker(a-B).$$
So $\dim\ker(a-A\oplus B)\leq m_A + m_B$. Since this is true for all $a\in F^\times$, therefore $m_{A\oplus B}\leq m_A+m_B$. So we have
\begin{align*}
\ell_J(A\oplus B)=&\frac{n_1+n_2-m_{A\oplus B}}{n_1+n_2}\\
\geq&\frac{n_1+n_2-m_A-m_B}{n_1+n_2}\\
\geq&\frac{n_1-m_A}{n_1+n_2}+\frac{n_2-m_B}{n_1+n_2}\\
\geq&\frac{n_1}{n_1+n_2}\ell_J(A)+\frac{n_2}{n_1+n_2}\ell_J(B).
\end{align*}
\end{proof}

\begin{lem}[Stolz and Thom {\cite[Lemma 3.11]{ST}}]
\label{lem:Jordanexp}
There is an absolute constant $c_0$, such that for any finite classical quasisimple group of Lie type $G$, and for any $g\in G\setminus Z(G)$, where $Z(G)$ is the center of $G$, then $C(g)^K = G$ for all $K\geq\frac{c}{\ell_J(g)}$.
\end{lem}

In short, elements of large Jordan length will automatically have small covering number. 

The next step is to identify subgroups of these quasisimple groups of Lie type isomorphic to the alternating groups. A key step is to treat elements in alternating groups as matrices, namely the permutation matrices. These are the matrices with exactly one entry of value $1$ in each column and in each row, and $0$ in all other entries. Such an $n\times n$ matrix will act on the standard orthonormal basis of an $n$-dimensional vector space by permutation, and thus will provide an embedding of $\mathrm{S}_n$ into $\mathrm{GL}_n(F)$ for any field $F$. Any such matrix is in $\mathrm{A}_n$ iff it has determinant $1$.

\begin{lem}
If $P$ is an $n\times n$ permutation matrix where its cycle decomposition has $k$ cycles, then we have $\ell_J(P)\geq\frac{n-k}{n}$.
\end{lem}
\begin{proof}
By cycle decomposition, after a change of basis in the vector space, $P$ will be a direct sum of many cyclic permutation matrices. By Lemma~\ref{lem:jordan sum}, it's enough to prove the case when $P$ is a single cycle of length $n$, and show that $\ell_J(P)\geq\frac{n-1}{n}$.

Since $P$ is a single cycle of length $n$, its eigenvalues in the algebraic closure of $F$ are precisely all the $n$-th roots of unity, with multiplicity 1 for each root of unity. So $\dim\ker(a-P)\leq 1$ for all $a\in F^\times$. So $\ell_J(P)\geq\frac{n-1}{n}$.
\end{proof}

\begin{prop}
\label{sl}
There is an absolute constant $K_0$ such that, for any $m<\infty$, for any finite quasisimple group of Lie type of $n\times n$ matrices, if it contains $\mathrm{A}_n$ as permutation matrices, then it will have the covering property $(K_0,m)$ for large enough $n$.
\end{prop}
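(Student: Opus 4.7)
The plan is to combine the cycle-type construction from the proof of Theorem~\ref{alter} with the Jordan-length machinery above (the permutation-matrix lower bound on $\ell_J$ together with Lemma~\ref{Jordanexp}) to exhibit, inside the embedded copy of $A_n$, a single element $\sigma$ whose first $m$ powers all carry uniformly bounded covering number in $G$. Once that is done, $\sigma$ itself witnesses the covering property $(K_0,m)$ in $G$.

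Following the proof of Theorem~\ref{alter}, choose distinct odd primes $p,q$ with $m<p<q$. For $n$ large enough, write $n=ap+bq$ with $a,b\geq 1$ and $\max(a,b)>1$, and let $\sigma\in A_n$ be the product of $a$ disjoint $p$-cycles and $b$ disjoint $q$-cycles with no fixed points. Since $\gcd(i,pq)=1$ for every $1\leq i\leq p-1$, each $\sigma^i$ again has cycle type $a$ $p$-cycles plus $b$ $q$-cycles in $A_n$. Now view $\sigma^i$ in $G$ through the given permutation-matrix embedding. For each of the four classical families ($SL_n$, $Sp_{2n}$, $SU_n$, and $\O_{2n}/\O_{2n+1}$) the ambient matrix dimension $N$ is at most a constant multiple of $n$, and in the standard embeddings each $A_n$-cycle of length $\ell$ is sent to a bounded number $j$ of $N\times N$ matrix cycles of the same length $\ell$, with $N$ of the form $jn$ up to an additive $O(1)$. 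Hence the total cycle count $k$ of $\sigma^i$ as an $N\times N$ permutation matrix satisfies $k/N=(a+b)/n+o(1)$. Combining this with $a+b\leq n/p$ (coming from $n=ap+bq$ and $p<q$) and the bound $\ell_J(P)\geq (N-k)/N$ for permutation matrices, one gets
\[
\ell_J(\sigma^i) \;\geq\; 1-\frac{a+b}{n}-o(1) \;\geq\; 1-\frac{1}{p}-o(1) \;\geq\; \frac{1}{3}
\]
for $n$ sufficiently large.

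Each $\sigma^i$ is a nontrivial permutation matrix and therefore not a scalar, so it lies outside $Z(G)$. Lemma~\ref{Jordanexp} then gives $C(\sigma^i)^{K_0}=G$ for any $K_0\geq 3c$, where $c$ is the absolute constant of that lemma. Setting $K_0:=\lceil 3c\rceil$, we conclude that $\sigma$ has the covering property $(K_0,m)$ in $G$, with $K_0$ independent of both $m$ and $n$.

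The main technical subtlety is the dimension bookkeeping in the passage from $A_n$ to $G$: one must verify case by case for each of the four classical families that the standard permutation-matrix embedding of $A_n$ sends a cycle of length $\ell$ to a bounded number of matrix cycles of length $\ell$, and that the ambient dimension $N$ is linear in $n$. In each family this follows from the explicit action of the group on its natural module (with $j=1$ for $SL_n$ and $SU_n$, and $j=2$ for $Sp_{2n}$ and the orthogonal groups); any small discrepancy in the $k/N$ ratio only changes the constant $K_0$ by a bounded factor and so preserves its uniformity in $m$ and $n$.
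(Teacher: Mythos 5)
Your proposal is correct and follows essentially the same route as the paper: the same element $\sigma$ built from $a$ disjoint $p$-cycles and $b$ disjoint $q$-cycles with $m<p<q$, the same observation that its powers up to $m$ keep this cycle type, the same lower bound $\ell_J\geq 1-(a+b)/n\geq 1-1/p$ via the permutation-matrix estimate, and the same appeal to Lemma~\ref{Jordanexp} to get an absolute covering constant. The only difference is organizational: the case-by-case bookkeeping for the doubled embeddings in $Sp$, $SU$ and the orthogonal groups is not needed for this statement (which concerns the straight permutation-matrix embedding, i.e.\ your $j=1$ case); in the paper that extra bookkeeping is exactly the content of Proposition~\ref{spsuom}, handled via Proposition~\ref{jordan sum}.
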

\begin{proof}
Let $K_0>3c_0$ for the absolute constant $c_0$ in Lemma~\ref{lem:Jordanexp}. Then any element $A$ of Jordan length $\geq\frac{1}{3}$ will have covering number $K_0$ in any finite quasisimple group of Lie type. 

Pick any odd prime $p>m$, and pick another prime $q>p$. For any large enough $n$, we have $n=ap+bq$ for some integers $a>1$, $0<b<p+1$. Then find $\s\in \mathrm{A}_n$ made up of exactly $a$ $p$-cycles and $b$ $q$-cycles, where all cycles are disjoint. This element will be fixed-point free and non-exceptional, and it will have at most $a+b\leq\frac{n}{p}+p$ cycles.

For any finite quasisimple group of Lie type of $n\times n$ matrices, suppose it contains $\mathrm{A}_n$ as permutation matrices. Let $P$ be the matrix corresponding to $\s$. Then we have
$$\ell_J(P)\geq\frac{n-\frac{n}{p}-p}{n}=1-\frac{1}{p}-\frac{p}{n}>\frac{1}{3}.$$ 
The last inequality follows because $p\geq 3$ and $n\geq2p+q>3p$.

So this element will have covering number $K_0$ in $G$. It clearly has order $pq$, and all of its powers coprime to $pq$ will also have the same covering number. So $G$ has the covering property $(K_0,p-1)$.
\end{proof}

\begin{cor}
\label{cor:sl}
For any $m<\infty$, all finite special linear groups of rank $r$ for large enough $r$ will have the covering property $(K_0,m)$. Here $K_0$ is the absolute constant in Proposition~\ref{sl}.
\end{cor}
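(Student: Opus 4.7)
The plan is a direct reduction to Proposition~\ref{sl}: since that proposition already does the heavy lifting for any finite quasisimple group of Lie type of large rank that contains $A_n$ as permutation matrices, the only thing left to verify is that $SL_n(F)$ genuinely fits the hypothesis, i.e.\ that $A_n$ sits inside $SL_n(F)$ as a subgroup of honest permutation matrices.

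First I would recall the standard permutation representation of the symmetric group. For any field $F$ and any positive integer $n$, a permutation $\sigma\in S_n$ gives rise to the $n\times n$ matrix $P_\sigma$ over $F$ whose $(i,j)$-entry equals $1$ if $i=\sigma(j)$ and $0$ otherwise. This defines an injective group homomorphism $S_n\hookrightarrow GL_n(F)$, and the determinant of $P_\sigma$ in $F$ is the image of the sign of $\sigma$. In particular $\det(P_\sigma)=1$ for every even permutation, so the restriction of this representation yields an injective homomorphism $A_n\hookrightarrow SL_n(F)$ whose image consists of permutation matrices in the sense used in Proposition~\ref{sl}.

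Next I would note that for any finite field $F$ and any $n$ sufficiently large (say $n\geq 3$, with $n\geq 3$ and $|F|\geq 4$ when $n=2$, which is irrelevant once $n$ grows), $SL_n(F)$ is a finite quasisimple group of Lie type whose Lie-theoretic rank is $n-1$ and hence tends to infinity with $n$. By the previous paragraph it contains $A_n$ as permutation matrices, so Proposition~\ref{sl} applies verbatim: for any $m<\infty$, there exists $N(m)$ such that $SL_n(F)$ has the covering property $(K_0,m)$ whenever $n\geq N(m)$, uniformly in the choice of finite field $F$. This is exactly the statement of the corollary.

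There is no real obstacle here, since the content lies in Proposition~\ref{sl}; the only thing to observe is the almost tautological fact that the permutation representation of an even permutation has determinant $+1$. The same outline will have to be carried out, with genuine work replacing this triviality, for $Sp_n$, $SU_n$ and $\Omega_n$: in those cases one must exhibit $A_n$ (or an alternating group of comparable rank) inside the classical group as a subgroup of matrices that remain close enough to permutation matrices for the Jordan-length estimate of Proposition~\ref{sl} to apply, which is where the substantive part of the subsection will lie.
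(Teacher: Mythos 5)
Your proof is correct and follows the paper's intended route exactly: the paper treats this corollary as immediate from Proposition~\ref{sl} together with the observation (made just before Definition~\ref{def:Jordan}) that a permutation matrix lies in $SL_n(F)$ precisely when the permutation is even, which is the same determinant argument you give. The only cosmetic difference is your careful remark that the Lie-theoretic rank of $SL_n$ is $n-1$ rather than $n$, a harmless indexing point the paper glosses over since only ``rank large enough'' matters.
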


\begin{prop}
\label{spsuom}
There is an absolute constant $K_0$, such that for any $m<\infty$, we have the following:
\begin{en}
\item For any finite quasisimple group of Lie type of $2n\times 2n$ matrices, if it contains $\mathrm{A}_n$ as $\{P\oplus P:P\in \mathrm{A}_n$ is a permutation $n\times n$ matrix$\}$, then it will have the covering property $(K_0,m)$ for large enough $n$.
\item Let $I_1$ be the 1 by 1 identity matrix. Then for any finite quasisimple group of Lie type of $(2n+1)\times (2n+1)$ matrices, if it contains $\mathrm{A}_n$ as $\{P\oplus P\oplus I_1:P\in \mathrm{A}_n$ is a permutation $n\times n$ matrix$\}$, then it will have the covering property $(K_0,m)$ for large enough $n$.
\item Let $I_2$ be the 2 by 2 identity matrix. Then for any finite quasisimple group of Lie type of $(2n+2)\times (2n+2)$ matrices, if it contains $\mathrm{A}_n$ as $\{P\oplus P\oplus I_2:P\in \mathrm{A}_n$ is a permutation $n\times n$ matrix$\}$, then it will have the covering property $(K_0,m)$ for large enough $n$.
\end{en}
\end{prop}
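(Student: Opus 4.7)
The plan is to mirror the proof of Proposition~\ref{sl}: take the same permutation $\sigma\in A_n$ composed of $a$ disjoint $p$-cycles and $b$ disjoint $q$-cycles (with $p>m$ an odd prime, $q>p$ a prime, $a>1$, and $b\geq 1$), and estimate the Jordan length of the block matrix $P\oplus P\oplus I_k$ (writing $I_k$ for the $k\times k$ identity with $k\in\{0,1,2\}$ for the three cases) embedded in $G$ according to the assumption.

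First I would compute the eigenvalue multiplicities. The characteristic polynomial of $P$ over $F$ is $(x^p-1)^a(x^q-1)^b$, so the $p$-th roots of unity each appear with multiplicity $a$, the $q$-th roots of unity each with multiplicity $b$, and the sole overlap at the eigenvalue $1$ gives $m_P=a+b$. The block structure doubles every eigenvalue multiplicity and adds $k$ more at the eigenvalue $1$ from the $I_k$ summand, producing $m_{P\oplus P\oplus I_k}=2(a+b)+k$. Since $a+b<n/p$ (from $n=ap+bq>(a+b)p$, using $q>p$ and $b\geq 1$), the Jordan length satisfies
\[
\ell_J(P\oplus P\oplus I_k) \;=\; \frac{(2n+k)-(2(a+b)+k)}{2n+k} \;=\; \frac{2n-2(a+b)}{2n+k} \;>\; \frac{2n(p-1)/p}{2n+k},
\]
and for $n$ large enough (depending only on $p$ and $k$) this exceeds $1/2$, since the right-hand side tends to $(p-1)/p\geq 2/3$ as $n\to\infty$.

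Then by Lemma~\ref{Jordanexp}, the element $P\oplus P\oplus I_k$ has covering number at most $K_0$ in $G$, for the same absolute constant $K_0>2c$ used in Proposition~\ref{sl}. To upgrade this to the full covering property $(K_0,m)$, I would observe that for $1\leq i\leq m<p$, the integer $i$ is coprime to both $p$ and $q$, so $P^i$ has the same cycle structure as $P$, and the same Jordan length bound applies verbatim to $(P\oplus P\oplus I_k)^i=P^i\oplus P^i\oplus I_k$. Consequently $G$ has the covering property $(K_0,m)$ for $n$ sufficiently large.

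There is no serious obstacle: the statement is essentially a mechanical extension of Proposition~\ref{sl}. The only subtlety is to keep track of how the extra $I_k$ summand inflates the eigenvalue multiplicity at $1$ without damaging the Jordan length estimate asymptotically; this is exactly what forces the qualifier ``$n$ large enough''. One should also verify that $P\oplus P\oplus I_k$ is not central in $G$ so that Lemma~\ref{Jordanexp} applies, but this is immediate since $P$ is manifestly non-scalar for a nontrivial $\sigma$ and the center of a classical quasisimple group of Lie type consists of scalar matrices.
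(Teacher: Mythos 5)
Your proposal is correct and follows essentially the same route as the paper, which proves this proposition by repeating the argument of Proposition~\ref{sl} with $\s\oplus\s$, $\s\oplus\s\oplus I_1$ or $\s\oplus\s\oplus I_2$ in place of $\s$; your direct count of the eigenvalue-$1$ multiplicity $2(a+b)+k$ is just an explicit version of the paper's appeal to Proposition~\ref{jordan sum}, and your checks of the powers $1\leq i\leq m<p$ and of non-centrality are exactly the points the paper leaves implicit.
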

\begin{proof}
The strategy is identical to Proposition~\ref{sl}. Just take $\s\oplus\s$, $\s\oplus\s\oplus I_1$ or $\s\oplus\s\oplus I_2$ instead of $\s$, and use Lemma~\ref{lem:jordan sum}.
\end{proof}

\begin{de}
A vector space $V$ is a \emph{non-degenerate formed space} if it has a non-degenerate quadratic form $Q$ (the orthogonal case), or a non-degenerate alternating bilinear form B (the symplectic case), or a non-degenerate Hermitian form $B$ (the unitary case).
\end{de}

\begin{lem}[Witt's Decomposition Theorem]
Let $V$ be any non-degenerate formed space over a finite field $F$. Then we have an orthogonal decomposition $V=W\oplus(\bigoplus_{i=1}^n H_i)$ where $W$ is anisotropic of dimension at most $2$, and $H_i$ are hyperbolic planes.
\end{lem}
\begin{proof}
These are standard facts in the geometry of classical groups (e.g., See \cite{CGGA}).
\end{proof}

\begin{prop}
For a non-degenerate formed space, the special isometry group, i.e., the group of isometries of determinant 1, contains an alternating group in one of the ways described by Proposition~\ref{spsuom}.
\end{prop}
\begin{proof}
Let $V$ be any finite dimensional non-degenerate formed space over any finite field $F$. Then we have an orthogonal decomposition $V=W\oplus H$ with an anisotropic space $W$ of dimension at most $2$, and an orthogonal sum of hyperbolic planes $H=\bigoplus_{i=1}^n H_i$.

Then let $(v_i,w_i)$ be a hyperbolic pair generating $H_i$ for each $i$. For any $\s\in \mathrm{A}_n$, we can let $\s$ act by permutation on the set $\{v_1,..,v_n,w_1,...,w_n\}$, such that $\s(v_i)=v_{\s(i)}$ and $\s(w_i)=w_{\s(i)}$.

Now clearly $\{v_1,...,v_n,w_1,..,w_n\}$ is a basis of $H$. So the above action of $\s$ induces a linear transformation $P\oplus P$ on $H$, where $P$ is the $n\times n$ permutation matrix for $\s$. And this $P\oplus P$ is clearly an isometry on $H$ by construction. Now taking the direct sum of $P\oplus P$ on $H$ and the identity matrix on $W$, we shall obtain our desired embedding of $\mathrm{A}_n$ into the full isometry group.

Finally, since $P$ is a permutation matrix for an even permutation, it has determinant 1. Therefore the above embedding of $\mathrm{A}_n$ is in the special isometry group.
\end{proof}

\begin{cor}
\label{cor:spsu}
For any $m<\infty$, any finite symplectic or special unitary group of rank $r$ has the covering property $(K_0,m)$ for large enough $r$. $K_0$ is the absolute constant in Proposition~\ref{spsuom}.
\end{cor}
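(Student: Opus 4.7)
The plan is to combine Proposition~\ref{spsuom} with the preceding embedding proposition, noting that symplectic and special unitary groups are the universal perfect central extensions of their projective simple quotients (for sufficiently large rank), and thus are quasisimple groups of Lie type.

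First I would fix $m<\infty$ and take $K_0$ to be the absolute constant from Proposition~\ref{spsuom}. Given a finite symplectic group $Sp_{2n'}(F)$ or special unitary group $SU_{n'}(F)$, I apply the previous proposition to the underlying vector space $V$ equipped with its alternating or Hermitian form. This yields an orthogonal decomposition $V=W\oplus\bigoplus_{i=1}^n H_i$ where $\dim W\in\{0,1,2\}$ (in fact $\dim W=0$ always in the symplectic case, and $\dim W\in\{0,1\}$ in the unitary case). Using the hyperbolic pairs $(v_i,w_i)$ generating each $H_i$, the embedding constructed just before gives a copy of $A_n$ inside the symplectic or unitary group, acting as $P\oplus P$ on $H$ in the basis $\{v_1,\dots,v_n,w_1,\dots,w_n\}$ and as the identity on $W$. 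Depending on whether $\dim W$ is $0$, $1$, or $2$, this matches exactly one of the three embedding shapes in Proposition~\ref{spsuom}.

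Since the rank of the ambient group is $2n+\dim W\in\{2n,2n+1,2n+2\}$, requiring the overall rank to be at least some large threshold forces $n$ itself to be at least the threshold required by Proposition~\ref{spsuom}. Applying that proposition then yields that the ambient symplectic or special unitary group has covering property $(K_0,m)$, which is exactly Corollary~\ref{cor:spsu}.

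The only step that requires any care is bookkeeping: ensuring the numerical threshold on $n$ in Proposition~\ref{spsuom} (which depends on $m$) translates into a threshold on the rank of the ambient classical group, and that we are genuinely in the quasisimple regime so that Lemma~\ref{Jordanexp} applies. Both are routine: for $n$ large enough the groups $Sp_{2n}$ and $SU_n$ are quasisimple with the projective simple quotient of rank $n$ (up to a factor of $2$), and adding the constant $\dim W\leq 2$ only perturbs the rank bound by a bounded amount. There is no hard step; the corollary is essentially immediate once the embedding proposition and Proposition~\ref{spsuom} are in place.
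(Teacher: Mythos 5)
Your proposal is correct and follows the same route the paper intends: the corollary is obtained immediately by combining the hyperbolic-plane decomposition and the resulting embedding of $A_n$ (as $P\oplus P$ on $H$, identity on $W$ with $\dim W\leq 2$) with Proposition~\ref{spsuom}, exactly as you describe. The rank bookkeeping you mention is the only content beyond that, and you handle it correctly.
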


\begin{cor}
\label{cor:om}
For any $m<\infty$, any $\mathrm{\O}_{2n}^+(\F_q)$, $\mathrm{\O}_{2n+1}(\F_q)$ or $\mathrm{\O}_{2n}^-(\F_q)$ has the covering property $(K_0,m)$ for large enough $n$. $K_0$ is the absolute constant in Proposition~\ref{spsuom}.
\end{cor}
\begin{proof}
Embed $\mathrm{A}_n$ in $\mathrm{SO}_{2n}^+(q)$, $\mathrm{SO}_{2n}^-(q)$ and $\mathrm{SO}_{2n+1}(q)$ in the ways described by Proposition~\ref{spsuom}. After taking the commutator subgroup, the groups $\mathrm{\O}_{2n}^+(q)$, $\mathrm{\O}_{2n}^-(q)$ and $\mathrm{\O}_{2n+1}(q)$ will still contain $\mathrm{A}_n$ through this embedding, because $\mathrm{A}_n$ is its own commutator subgroup. So we may apply Proposition~\ref{spsuom} to $\mathrm{\O}_{2n}^+(q)$, $\mathrm{\O}_{2n}^-(q)$ and $\mathrm{\O}_{2n+1}(q)$ and obtain the desired result.
\end{proof}

Proposition~\ref{conv:largerank} is proven by putting Corollary~\ref{cor:sl}, Corollary~\ref{cor:spsu} and Corollary~\ref{cor:om} together.


\section{Proof of Theorem~\ref{main1}}
\label{sec:main1}

The results of Section~\ref{sec:finsim} can be summarized into the following useful lemma.

\begin{lem}
\label{lem:conv}
For any integer $D$ and any constant $c$, we can find integers $D',K_1,K_2,m_1,m_2$ such that all $D'$-quasirandom finite quasisimple groups have the symmetric double covering property $[(K_1,m_1),(K_2,m_2)]$ such that $m_1>cK_1^{D^2}$, $m_2>cK^{D^2}$.
\end{lem}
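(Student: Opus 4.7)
The strategy is to assemble the three results of Section~\ref{sec:finsim} (Corollary~\ref{conv:smallrank} for bounded-rank Lie type and sporadics, the alternating-group proposition of Subsection~\ref{sub:alt}, and Proposition~\ref{conv:largerank} for classical Lie type of large rank) into a single uniform statement, then relax the resulting symmetric covering property to a symmetric double covering property via the identity trick.

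First I observe that it suffices to produce one pair $(K,m)$ and one threshold $D'$ such that every $D'$-quasirandom finite quasisimple group has the symmetric covering property $(K,m)$ with $m>cK^{D^2}$. Granting this, the conclusion of the lemma follows immediately: by the third remark after the definition of the symmetric double covering property, taking the first element to be the identity yields the symmetric double covering property $[(1,\infty),(K,m)]$, and the requirements $m_1>cK_1^{D^2}$, $m_2>cK_2^{D^2}$ reduce to $\infty>c$ and $m>cK^{D^2}$, both of which hold by construction.

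To produce such a uniform $(K,m)$, I partition the finite quasisimple groups by rank. Fix $r_0$ large enough for Proposition~\ref{conv:largerank} to apply, giving an absolute constant $K_0$ that handles every classical Lie-type quasisimple group of rank $\geq r_0$ with property $(K_0,m)$ for arbitrary $m$, with no quasirandomness threshold required. For the finitely many rank values $r\leq r_0$, Corollary~\ref{conv:smallrank} supplies constants $K_r\max(r,12)$, and I take their maximum. Subsection~\ref{sub:alt} handles alternating-type quasisimple groups (of arbitrary rank) with constant $8$. Setting
\begin{eq}
K:=\max\bigl(K_0,\ 8,\ \max_{r\leq r_0} K_r\max(r,12)\bigr),
\end{eq}
I then choose any integer $m>cK^{D^2}$ and pick $D'$ large enough that: (a) every $D'$-quasirandom quasisimple group of rank $\leq r_0$ satisfies the hypothesis of Corollary~\ref{conv:smallrank} for this $m$; (b) every $D'$-quasirandom alternating-type quasisimple group satisfies the hypothesis of the alternating proposition for this $m$; and (c) no $D'$-quasirandom quasisimple group is abelian (trivial) or sporadic (finitely many to exclude). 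Each such group then falls into exactly one of the three regimes and acquires the symmetric covering property $(K,m)$, as required.

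The main obstacle here is essentially bookkeeping: verifying that the three cases are exhaustive, which is precisely where the classification of finite simple groups enters, and checking that the various thresholds can all be chosen to depend only on $D$ and $c$. Since only finitely many bounded-rank possibilities appear and the large-rank constant $K_0$ is absolute, the uniform $K$ exists; the lemma is really a repackaging of Section~\ref{sec:finsim} rather than a new argument.
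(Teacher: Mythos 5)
Your reduction to a single symmetric covering pair $(K,m)$ breaks down at the step where you fix the rank cutoff. You claim one can ``fix $r_0$ large enough for Proposition~\ref{conv:largerank} to apply, giving an absolute constant $K_0$ that handles every classical Lie-type quasisimple group of rank $\geq r_0$ with property $(K_0,m)$ for arbitrary $m$.'' That is not what Proposition~\ref{conv:largerank} says, and it cannot be true: the quantifier order there is ``for any $m<\infty$ there is a rank threshold $r(m)$,'' and indeed the proofs choose primes $p>m$, $q>p$ and need $n$ large relative to them, so the threshold genuinely grows with $m$. Moreover no \emph{fixed} finite group can have the covering property $(K_0,m)$ for arbitrary $m$: once $m$ reaches the order of the chosen element $g$, some power $g^i$ is the identity and $C(g^i)^{K_0}\neq G$. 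So a group of rank exactly $r_0$ over a small field is a counterexample to your claim.

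Once this is corrected, your construction becomes circular: you need $r_0$ before you can form $K=\max\bigl(K_0,\,8,\,\max_{r\leq r_0}K_r\max(r,12)\bigr)$, you need $K$ before you can choose $m>cK^{D^2}$, but the admissible $r_0$ depends on $m$. Since the Stolz--Thom constant $K_r$ is completely uncontrolled as a function of $r$, there is no way (with the results as stated in the paper) to close this loop and guarantee a single pair with $m>cK^{D^2}$. This is exactly the obstruction the symmetric \emph{double} covering property is designed to break, and it is how the paper's proof proceeds: set $K_1=K_0$ (absolute), pick $m_1>cK_1^{D^2}$, let $r=r(m_1)$ from Proposition~\ref{conv:largerank}, set $K_2=K_r\max(r,12)$ from Corollary~\ref{conv:smallrank}, pick $m_2>cK_2^{D^2}$, and finally choose $D'$; groups of rank $\geq r$ carry the first slot and groups of rank $<r$ carry the second (via the identity trick you cite, used in the slot you don't need). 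Your use of the identity trick and the three-way case split is fine, and also note the small point that $m_1$ must be a finite integer (take any integer $m_1>c$ rather than $\infty$); but the uniform-$K$ step is a genuine gap, not bookkeeping.
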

\begin{proof}
Let $K_1$ be $\max(20,K_0)$ where the absolute constant $K_0$ is as in Proposition~\ref{conv:largerank}. Pick some $m_1>cK_1^{D^2}$. Find large enough $r$ such that, according to Proposition~\ref{conv:largerank} and Proposition~\ref{prop:altrank}, all finite quasisimple groups (including the alternating case) of ranks $\geq r$ will have the symmetric covering property $(K_1,m_1)$.

Set $K_2:=K(r)\max(3r+1,34)$ as in Corollary~\ref{conv:smallrank}, and pick some $m_2>cK_2^{D^2}$. Then for large enough $D'$, all $D'$-quasirandom finite quasisimple groups will have the symmetric covering property $(K_2,m_2)$.

In all cases, a $D'$-quasirandom finite quasisimple group will have the symmetric double covering property $[(K_1,m_1),(K_2,m_2)]$.
\end{proof}

\begin{rem}
In the above proof, one cannot substitute the double covering properties with the covering properties. To have a covering property $(K,m)$, a finite simple group must either have a large enough rank to accommodate the large m, according to Proposition~\ref{conv:largerank}, or it must have a small enough rank to accomodate the small K, according to Proposition~\ref{conv:smallrank}. So there might be a gap between the ``large enough rank'' and the ``small enough rank'', where the finite simple subgroups in the gap would fail to have the covering property $(K,m)$, no matter how quasirandom they are.

In short, the covering properties of finite quasisimple groups are not necessarily uniform. It is uniform when obtained through increasing ranks, and it is uniform when obtained through base fields of increasing sizes. At least with the techniques in this paper, we cannot combine the two uniformity into one. So we must use the double covering properties.
\end{rem}

\begin{prop}
\label{bp:cover}
Let $G$ be a group with the symmetric double covering property for some parameters, and let $(G_i)_{i\in I}$ be an arbitrary family of groups with the symmetric double covering property for some uniform parameters. Then the following are true:
\begin{en}
\item For any normal subgroup $N$, $G$ has the symmetric double covering property for the same parameters mod $N$.
\item Any quotient group of $G$ has the symmetric double covering property for the same parameters.
\item The group $\prod_{i\in I}G_i$ has the symmetric double covering property for the same parameters.
\item As a result of the (ii) and (iii), any ultraproduct $\prod_{i\to\omega}G_i$ has the symmetric double covering property for the same parameters.
\end{en}
\end{prop}
\begin{proof}
(i), (ii) and (iv) are straightforward.

To see (iii), let $g_{i,1},g_{i,2}\in G_i$ be the pairs giving $G_i$ the symmetric double covering property. Then I claim that $(g_{i,1})_{i\in I},(g_{i,2})_{i\in I}\in \prod_{i\in I}G_i$ is the pair giving the desired symmetric double covering property.

For any element $(g_i)_{i\in I}\in\prod_{i\in I}G_i$, then each $g_i$ is in $G_i$. And by its symmetric double covering property, we know
$$G_i=C(g_{i,1})^{K_1} C(g_{i,1}^{-1})^{K_1} C(g_{i,2})^{K_2} C(g_{i,2}^{-1})^{K_2}.$$ 
So we can find $a_{i,j},b_{i,j}\in G_i$ for $i\in I$ and $1\leq j\leq K_1$, and $c_{i,j},d_{i,j}\in G_i$ for $i\in I$ and $1\leq j\leq K_2$, such that 
$$g_i=(\prod_{1\leq j\leq K_1}(a_{i,j}g_{i,1}a_{i,j}^{-1})(b_{i,j}g_{i,1}^{-1}b_{i,j}^{-1}))(\prod_{1\leq j\leq K_2}(c_{i,j}g_{i,2}c_{i,j}^{-1})(d_{i,j}(g_{i,2})^{-1}d_{i,j}^{-1})).$$
Since the above identity is true for all $i\in I$, we have
\begin{align*}
(g_i)_{i\in I}=&(\prod_{1\leq j\leq K_1}((a_{i,j})_{i\in I}(g_{i,1})_{i\in I}(a_{i,j})_{i\in I}^{-1})((b_{i,j})_{i\in I}(g_{i,1})_{i\in I}^{-1}(b_{i,j})_{i\in I}^{-1}))\\
&(\prod_{1\leq j\leq K_2}((c_{i,j})_{i\in I}(g_{i,2})_{i\in I}(c_{i,j})_{i\in I}^{-1})((d_{i,j})_{i\in I}(g_{i,2})_{i\in I}^{-1}(d_{i,j})_{i\in I}^{-1})).
\end{align*}
So we have proven (iii).
\end{proof}

\begin{cor}
Let $\cC_{QS}$ be the class of finite quasisimple groups. Then $\cC_{QS}$ is a Q.U.P. class.
\end{cor}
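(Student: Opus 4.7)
The approach is to chain together three results already established in the paper: Lemma~\ref{lem:conv} converts quasirandomness of a finite quasisimple group into a concrete symmetric double covering property, \L o\'{s}'s theorem transports this covering property through the ultraproduct, and Theorem~\ref{crit} converts the resulting covering property back into quasirandomness of the limit. Given a target dimension $D$, I apply Lemma~\ref{lem:conv} with the constant $c = c_D$ taken from Theorem~\ref{crit} to obtain integers $D'(D), K_1, K_2, m_1, m_2$ (depending on $D$) such that every $D'(D)$-quasirandom finite quasisimple group has the symmetric double covering property $[(K_1, m_1), (K_2, m_2)]$ with $m_i > c_D K_i^{D^2}$.

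The symmetric double covering property $[(K_1, m_1), (K_2, m_2)]$ is a first-order statement with bounded quantifiers: it says there exist $g, g'$ such that for every $h$ and every $1 \leq i \leq m_1$, $1 \leq j \leq m_2$, there exist conjugators $a_1, \ldots, a_{K_1}, b_1, \ldots, b_{K_2}$ and signs $\epsilon_k, \delta_l \in \{\pm 1\}$ with
$$h = \prod_k a_k (g^i)^{\epsilon_k} a_k^{-1} \cdot \prod_l b_l ((g')^j)^{\delta_l} b_l^{-1}.$$
Since $K_1, K_2, m_1, m_2$ are fixed finite integers, the existential over sign choices unrolls into a finite disjunction and the whole formula is first-order, exactly in the style already used for the alternating groups in Section~\ref{sec:altexm}. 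Hence by \L o\'{s}'s theorem, any non-principal ultraproduct of $D'(D)$-quasirandom finite quasisimple groups inherits the property $[(K_1, m_1), (K_2, m_2)]$, and Theorem~\ref{crit} then forces the ultraproduct to be $D$-quasirandom.

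Finally I extract the witnessing function $f$ from the map $D \mapsto D'(D)$. After replacing $D'(D)$ by $\max(D, \max_{D'' \leq D} D'(D''))$ one may assume that $D'$ is non-decreasing with $D'(D) \geq D$, and so $D'(D) \to \infty$. Defining $f(n) := \max\{D : D'(D) \leq n\}$ for $n$ large enough that this set is nonempty (and $f(n) := 1$ otherwise) yields an unbounded non-decreasing $f : \Z^+ \to \Z^+$ with the property that any non-principal ultraproduct of $n$-quasirandom finite quasisimple groups is $f(n)$-quasirandom, which is exactly the Q.U.P.\ condition. No serious obstacle remains: all the classification-dependent work has already been absorbed into Lemma~\ref{lem:conv}, and the analytic local criterion is already proven as Theorem~\ref{crit}; the present corollary only requires the elementary bookkeeping above to invert $D \mapsto D'(D)$ into an unbounded $f$.
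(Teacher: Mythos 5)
Your proposal is correct and follows essentially the same route as the paper: invoke Lemma~\ref{lem:conv} with the constant $c_D$ from Theorem~\ref{crit}, transfer the symmetric double covering property to the ultraproduct via \L o\'{s}'s theorem, and conclude $D$-quasirandomness from Theorem~\ref{crit}. Your explicit construction of the unbounded non-decreasing function $f$ by inverting $D\mapsto D'(D)$ is routine bookkeeping that the paper leaves implicit, and it is carried out correctly.
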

\begin{proof}
For any integer $D$, and for the constant $c=f(D)$ as in Proposition~\ref{crit}, we can find $D',K_1,K_2,m_1,m_2$ as in Lemma~\ref{lem:conv}.

Let $G_i$ be a sequence of $D'$-quasirandom groups in $\cC_{QS}$. Then $G_i$ all have the symmetric double covering property $[(K_1,m_1),(K_2,m_2)]$. Then any ultraproduct $G=\prod_{i\to\omega}G_i$ will have the symmetric double covering property $[(K_1,m_1),(K_2,m_2)]$ by Proposition~\ref{bp:cover}. Since $m_1>f(D)K_1^{D^2}$, $m_2>f(D)K^{D^2}$, $G$ is $D$-quasirandom by Proposition~\ref{crit}.
\end{proof}

\begin{cor}[Quasirandomness implies a Nice Covering Property mod Cosocle]
\label{cor:conv}
For any integer $D$, and any constant $c$, we can find integers $D',K_1,K_2,m_1,m_2$ such that all finite $D'$-quasirandom groups have the symmetric double covering property $[(K_1,m_1),(K_2,m_2)]$ mod cosocle, with $m_1>cK_1^{D^2}$, $m_2>cK_2^{D^2}$.
\end{cor}
\begin{proof}
Let $D',K_1,K_2,m_1,m_2$ be exactly as in Lemma~\ref{lem:conv}. Let $G$ be any finite $D'$-quasirandom group.

Let $N$ be the cosocle of $G$. Then $G/N$ is a direct product of $D'$-quasirandom finite simple groups. These simple groups all have the symmetric double covering property $[(K_1,m_1),(K_2,m_2)]$. So by Proposition~\ref{bp:cover}, their product $G/N$ will have this same symmetric double covering property.
\end{proof}

\begin{cor}
Let $\cC_{CS(n)}$ be the class of finite groups with at most $n$ conjugacy classes in their cosocles. Then $\cC_{CS(n)}$ is a Q.U.P. class.
\end{cor}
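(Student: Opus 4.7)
The plan is to combine the three key ingredients already assembled in this section: Corollary~\ref{thm:conv} (quasirandomness forces a nice covering property mod cosocle), Proposition~\ref{prop:bcc} (a covering property mod cosocle lifts to the whole group with a bounded-class cosocle, at the cost of multiplying $K_1,K_2$ by $3n-2$), and Proposition~\ref{bp:cover} / \L o\'{s}'s theorem (the symmetric double covering property passes to ultraproducts). The desired unbounded non-decreasing function $f$ will arise implicitly from a parameter chase.

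First, fix $n$ and fix a target quasirandomness $D$. Let $c_D$ be the constant from Theorem~\ref{crit}. I would apply Corollary~\ref{thm:conv} with the same exponent $D^2$ but with the constant inflated to $c := c_D (3n-2)^{D^2}$. This yields integers $D',K_1,K_2,m_1,m_2$ such that every finite $D'$-quasirandom group has the symmetric double covering property $[(K_1,m_1),(K_2,m_2)]$ modulo its cosocle, with
\begin{equation*}
m_1 > c_D (3n-2)^{D^2} K_1^{D^2} = c_D\bigl((3n-2)K_1\bigr)^{D^2},
\qquad
m_2 > c_D\bigl((3n-2)K_2\bigr)^{D^2}.
\end{equation*}

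Next, let $G$ be any $D'$-quasirandom group in $\cC_{CS(n)}$, so the cosocle $N$ of $G$ has at most $n$ conjugacy classes of $G$. Applying Proposition~\ref{prop:bcc} with the normal subgroup $N$ itself (which is contained in the cosocle by definition), the mod-cosocle covering property promotes to the genuine symmetric double covering property $[((3n-2)K_1, m_1), ((3n-2)K_2, m_2)]$ on all of $G$. The inflation of the constant $c$ in the previous paragraph was chosen precisely so that these new parameters still satisfy the hypothesis of Theorem~\ref{crit} for $D$.

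Now let $(G_i)_{i\in\N}$ be any sequence of $D'$-quasirandom groups in $\cC_{CS(n)}$ and let $G=\prod_{i\to\omega}G_i$. Each $G_i$ has the symmetric double covering property $[((3n-2)K_1, m_1), ((3n-2)K_2, m_2)]$ by the previous step, and Proposition~\ref{bp:cover} (part 4) transfers this property to the ultraproduct $G$. Applying Theorem~\ref{crit} to $G$ with these parameters, $G$ is $D$-quasirandom. Since $D$ was arbitrary, the assignment $D'\mapsto D$ gives the required unbounded non-decreasing function, so $\cC_{CS(n)}$ is Q.U.P.

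There is no substantial obstacle here: every tool has already been proved and the argument is essentially a bookkeeping exercise. The only care needed is in ordering the quantifiers — one must inflate the constant in Corollary~\ref{thm:conv} \textbf{before} invoking Proposition~\ref{prop:bcc}, since the $(3n-2)$ factor enters the base but not the exponent of the covering parameters, so it can be absorbed into the multiplicative constant in front of $K_i^{D^2}$.
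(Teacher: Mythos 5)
Your proposal is correct and follows essentially the same route as the paper's own proof: inflate the constant to $c = c_D(3n-2)^{D^2}$ before invoking Corollary~\ref{thm:conv}, promote the mod-cosocle property to the whole group via Proposition~\ref{prop:bcc}, pass to the ultraproduct by \L o\'{s}/Proposition~\ref{bp:cover}, and finish with Theorem~\ref{crit}. The only (harmless) point both you and the paper gloss over is that the cosocle may have fewer than $n$ conjugacy classes, in which case the resulting covering parameters are only better, by the monotonicity of the covering property in $K$.
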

\begin{proof}
Let $c=f(D)(3n-2)^{D^2}$.

For any integer $D$, and for the constant $c$, we can find $D',K_1,K_2,m_1,m_2$ as in Corollary~\ref{cor:conv}. 

Let $G_i$ be a sequence of $D'$-quasirandom groups in $\cC_{CS(n)}$. Then $G_i$ all have the symmetric double covering property $[(K_1,m_1),(K_2,m_2)]$ mod cosocles. Since the cosocles contain at most $n$ conjugacy classes, by Proposition~\ref{prop:bcc}, $G_i$ all have the symmetric double covering property $[((3n-2)K_1,m_1),((3n-2)K_2,m_2)]$. Then any ultraproduct $G=\prod_{i\to\omega}G_i$ will have the symmetric double covering property $[((3n-2)K_1,m_1),((3n-2)K_2,m_2)]$ by Proposition~\ref{bp:cover}. 

Since $m_1>f(D)[(3n-2)K_1]^{D^2}$, $m_2>f(D)[(3n-2)K]^{D^2}$, $G$ is $D$-quasirandom by Proposition~\ref{crit}.
\end{proof}

\begin{proof}[Proof of Theorem~\ref{main1}]
For any integer $D$, let $c=f(D)(3n-2)^{D^2}$. We can find $D',K_1,K_2,m_1,m_2$ as in Corollary~\ref{cor:conv} and Lemma~\ref{lem:conv}.

Let $G_i$ be a sequence of $D'$-quasirandom groups in $\cC_n$. Then each $G_i$ is a direct product of $D'$-quasirandom groups in $\cC_{QS}\cup\cC_{CS(n)}$. These factor groups must then have the symmetric double covering property $[((3n-2)K_1,m_1),((3n-2)K_2,m_2)]$. By Proposition~\ref{bp:cover}, $G_i$ must also have this symmetric double covering property $[((3n-2)K_1,m_1),((3n-2)K_2,m_2)]$. Then any ultraproduct $G=\prod_{i\to\omega}G_i$ will have the symmetric double covering property $[((3n-2)K_1,m_1),((3n-2)K_2,m_2)]$ by Proposition~\ref{bp:cover}. 

Since $m_1>f(D)[(3n-2)K_1]^{D^2}$, $m_2>f(D)[(3n-2)K]^{D^2}$, Proposition~\ref{crit} implies that $G$ is $D$-quasirandom.
\end{proof}


\section{Applications}
\label{sec:app}
\subsection{Triangles in a quasirandom group}

A quasirandom group usually contains many patterns. For example, Gowers has shown the following result:

\begin{thm}[Gowers {\cite[Theorem 5.1]{G}}]
\label{Gtheorem}
Pick any $\e_1,\e_2>0,0<\alpha<1$. If $G$ is a $D$-quasirandom group for some large enough $D$, then for any subset $A$ of $G$ such that $|A|\geq\alpha |G|$, there are more than $(1-\e_1)\alpha^2|G|$ elements $x\in G$ such that $|A\cap xA|\geq(1-\e_2)\alpha^2|G|$.
\end{thm}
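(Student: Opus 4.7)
The plan is to derive this theorem from the $L^2$ mixing inequality for $D$-quasirandom groups that is the heart of Gowers's paper. That inequality says that, with normalized convolution $(f*g)(x) = \mathbb{E}_y f(y) g(y^{-1}x)$ on a finite $D$-quasirandom group $G$, one has $\|f*g\|_{L^2(G)} \le D^{-1/2}\|f\|_{L^2(G)}\|g\|_{L^2(G)}$ whenever $\int f = 0$. This follows from the Peter--Weyl decomposition of $L^2(G)$: convolution acts by matrix multiplication on each isotypic component, the operator norm of $\widehat{f}(\pi)$ is bounded by $\|\widehat{f}(\pi)\|_{\mathrm{HS}}/\sqrt{d_\pi}$, and when $\int f = 0$ only irreducibles of dimension $\ge D$ contribute, yielding the factor $D^{-1/2}$.

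Granted this, I would reduce the theorem to a second-moment computation. First, rewrite the correlation as a convolution: one checks $|A \cap xA| = |G|\cdot(1_A * 1_{A^{-1}})(x)$ directly from the definitions. Write $1_A = \alpha + h$ and $1_{A^{-1}} = \alpha + h'$ with $h, h'$ of mean zero; the cross terms $1*h'$ and $h*1$ vanish because $h$ and $h'$ integrate to zero, leaving $1_A * 1_{A^{-1}} = \alpha^2 + h*h'$. Since $\|h\|_{L^2}^2 = \|h'\|_{L^2}^2 = \alpha(1-\alpha)$, the mixing inequality gives $\|h*h'\|_{L^2} \le \alpha(1-\alpha)/\sqrt{D}$. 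Undoing the normalization converts this to
\[\sum_{x \in G}\bigl(|A \cap xA| - \alpha^2|G|\bigr)^2 \le \frac{\alpha^2(1-\alpha)^2\,|G|^3}{D}.\]

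Now Chebyshev finishes the job: the number of $x \in G$ with $|A\cap xA| < (1-\e_2)\alpha^2|G|$ is at most $(1-\alpha)^2|G|/(\e_2^2\alpha^2 D)$, which for $D$ large enough in terms of $\e_1, \e_2, \alpha$ is smaller than $|G| - (1-\e_1)\alpha^2|G|$. Hence more than $(1-\e_1)\alpha^2|G|$ elements $x$ satisfy $|A \cap xA| \ge (1-\e_2)\alpha^2|G|$, as claimed. In fact one obtains the stronger conclusion that a $(1 - O(1/D))$-fraction of all $x \in G$ are good, of which the stated bound is a trivial consequence.

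The only non-routine ingredient is the mixing inequality itself; the deduction above is a standard manipulation once it is in hand. For finite $G$ the Peter--Weyl sketch is elementary. Some care would be needed to formulate an analogue for the infinite quasirandom groups this paper also considers, presumably via a direct-integral decomposition of the left-regular representation; but none of the machinery developed in the rest of the paper (covering properties, cosocles, ultraproducts) is required here, since the theorem is a single-group harmonic-analysis statement whose proof lives entirely inside Fourier analysis on $G$.
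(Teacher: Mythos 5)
The paper offers no proof of this statement at all --- it is imported verbatim from Gowers \cite[Theorem 5.1]{G} as background for the application section --- and your argument is correct and is essentially the standard (Gowers/Babai--Nikolov--Pyber) proof of that result: the mixing inequality $\|f*g\|_{L^2}\le D^{-1/2}\|f\|_{L^2}\|g\|_{L^2}$ for mean-zero $f$, then a second-moment/Chebyshev count, which indeed yields the much stronger statement that all but $O_{\alpha,\e_2}(|G|/D)$ of the elements $x$ are good. Two small repairs: run the computation with the actual density $\beta=|A|/|G|\ge\alpha$ rather than $\alpha$ itself (the conclusion then follows since $\beta\ge\alpha$), and justify the mixing inequality by $\|\widehat f(\pi)\|_{\mathrm{op}}\le\|\widehat f(\pi)\|_{\mathrm{HS}}\le\|f\|_{L^2}/\sqrt{d_\pi}$ via Parseval, since the inequality $\|\widehat f(\pi)\|_{\mathrm{op}}\le\|\widehat f(\pi)\|_{\mathrm{HS}}/\sqrt{d_\pi}$ as you wrote it is false for, e.g., rank-one matrices (also note the theorem concerns finite groups, so no infinite-group analogue is needed).
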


Morally, if we define an \emph{$x$-pair} to be a set $\{y,xy\}$ for some $y\in G$, then this theorem means that any large enough subset of a quasirandom group $G$ will contain many $x$-pairs for many $x$.

Now given a q.u.p. class, we can obtain minimally almost periodic groups via ultraproducts of sequences of increasingly quasirandom groups. Then by applying ergodic theory on the ultraproduct, more patterns similar to that of Theorem~\ref{Gtheorem} might emerge. It is proven by Bergelson, Robertson and Zorin-Kranich \cite{BRZ} that, for a quasirandom group $G$ in a q.u.p class, any large enough subset of $G\times G$ will contain many $x$-triangle for many $x$. 

\begin{de}
Let $g$ be an element of a group $G$. Then a \emph{$g$-triangle} is the set $\{(x,y),(gx,y),(gx,gy)\}\subseteq G\times G$ for some $x,y\in G$.
\end{de}

\begin{thm}[Bergelson, Robertson and Zorin-Kranich {\cite[Theorem 1.12]{BRZ}}]
Let $G$ be contained in a q.u.p. class. For any $\e>0,0<\alpha<1$, there are integers $D,K$ such that, if $G$ is $D$-quasirandom, then for any subset $A$ of $G\times G$ with $|A|\geq\alpha |G|^2$, the set $T_A=\{g\in G: A$ contains more than $(\alpha^4-\e)|G|^2$ triangles$\}$ can cover $G$ with at most $K$ left translates of itself.
\end{thm}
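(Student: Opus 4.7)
The plan is to argue by contradiction and transfer the problem to an ergodic-theoretic statement on an ultraproduct, which by the q.u.p. hypothesis is minimally almost periodic. Suppose the conclusion fails for some $\e>0$ and $0<\alpha<1$. Then for each $i\in\N$ we can find a $D_i$-quasirandom group $G_i$ in the given q.u.p. class (with $D_i\to\infty$) and a set $A_i\subset G_i\times G_i$ with $|A_i|\geq\alpha|G_i|^2$ for which $T_{A_i}$ cannot be covered by $i$ of its left translates. Fix a non-principal ultrafilter $\omega$ on $\N$ and form $\mathcal{G}=\prod_{i\to\omega}G_i$, which is minimally almost periodic by Definition~\ref{def:qup}.

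Next I would equip $\mathcal{G}$ (and, via Keisler--Fubini, $\mathcal{G}\times\mathcal{G}$) with the Loeb measure $\mu$: a translation-invariant countably-additive probability measure that assigns to each internal set $\prod_{i\to\omega}B_i$ the standard part of $\lim_{i\to\omega}|B_i|/|G_i|$. Then $\mathcal{A}:=\prod_{i\to\omega}A_i$ satisfies $(\mu\times\mu)(\mathcal{A})\geq\alpha$, and the normalized triangle count in $A_i$ transfers, via \L o\'{s}'s Theorem applied to internal functions, to the triple correlation
\begin{eq}
F(g):=\int_{\mathcal{G}\times\mathcal{G}}1_\mathcal{A}(x,y)\,1_\mathcal{A}(gx,y)\,1_\mathcal{A}(gx,gy)\,d(\mu\times\mu)(x,y).
\end{eq}
Under the transfer, the failure of the finitary statement becomes: the set $\mathcal{T}:=\{g\in\mathcal{G}:F(g)\geq\alpha^4-\e\}$ cannot be covered by any finite number of its left translates, which by translation-invariance of $\mu$ forces $\mu(\mathcal{T})=0$. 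Our task is therefore reduced to proving the ergodic statement $\mu(\mathcal{T})>0$.

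The heart of the proof is this ergodic step. I would compute the spatial average $\int_\mathcal{G} F(g)\,d\mu(g)$, which after the substitution $u=gx$ becomes a quadruple correlation $\int 1_\mathcal{A}(x,y)1_\mathcal{A}(u,y)1_\mathcal{A}(u,ux^{-1}y)\,d\mu(x)\,d\mu(y)\,d\mu(u)$ over $\mathcal{G}^3$. Minimal almost periodicity of $\mathcal{G}$ says that the Koopman representation has no nontrivial finite-dimensional subrepresentations, so the Kronecker factor of every natural $\mathcal{G}$-action in sight is trivial. This lets one run a van der Corput / Gowers-$U^2$ argument adapted to the non-abelian setting to decouple the integrand and lower-bound the average by $((\mu\times\mu)(\mathcal{A}))^4\geq\alpha^4$. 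A Markov-type inequality applied to $F$ then yields $\mu(\mathcal{T})\geq \e/(1-\alpha^4+\e)>0$.

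Once $\mu(\mathcal{T})>0$, translation-invariance of $\mu$ together with a standard greedy covering argument produces a finite $K$ such that $K$ left translates of $\mathcal{T}$ cover $\mathcal{G}$. Transferring back via \L o\'{s}'s Theorem, for $\omega$-many $i$ the set $T_{A_i}$ must be coverable by $K$ of its left translates in $G_i$, contradicting the choice $K_i\to\infty$. The main obstacle is the ergodic step: establishing the triangle lower bound $\int F\,d\mu\geq\alpha^4$ on a minimally almost periodic group, which is exactly the substance of \cite[Theorem 1.12]{BRZ} and requires a genuine non-abelian Gowers-type inequality that controls three-point patterns by the Kronecker factor, here rendered trivial by the q.u.p. hypothesis.
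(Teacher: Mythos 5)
You should first note what the paper actually does here: it does not prove this statement at all. The theorem is imported verbatim from Bergelson, Robertson and Zorin-Kranich \cite[Theorem 1.12]{BRZ} and is stated in Section~\ref{sec:app} only as an application of the q.u.p.\ machinery developed in the paper, so there is no internal proof to compare against. Your proposal, on the other hand, is an outline of how such a proof would go rather than a proof. The frame (contradiction, ultraproduct of increasingly quasirandom counterexamples, Loeb measure on $\prod_{i\to\omega}G_i\times G_i$, transfer of the normalized triangle count to a triple correlation $F(g)$) is the right general shape, but the heart of the argument --- the lower bound $\int F\,d\mu\geq\alpha^4$ for the correlation of the pattern $\{(x,y),(gx,y),(gx,gy)\}$ under an action of a minimally almost periodic group --- is exactly the step you defer, and you yourself identify it as ``the substance of \cite[Theorem 1.12]{BRZ}.'' Since that is the statement being proved, the argument as written is circular. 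Moreover, the gesture toward a ``van der Corput / Gowers-$U^2$ argument with trivial Kronecker factor'' is not adequate: this pattern involves two distinct commuting actions of $\mathcal{G}$ on $\mathcal{G}\times\mathcal{G}$ (essentially left translation in the first coordinate and a diagonal action), and the known proofs (Bergelson--Tao for $SL_2(\F_p)$, BRZ in general) require a genuine double-recurrence analysis for minimally almost periodic groups, not a single $U^2$-type decoupling; minimal almost periodicity kills finite-dimensional subrepresentations of the Koopman representation, but one still has to identify and handle the characteristic factors for the two commuting actions.

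Two auxiliary steps are also not justified as stated. First, the equivalence you use between ``$\mathcal{T}$ cannot be covered by finitely many left translates'' and ``$\mu(\mathcal{T})=0$'' is not standard: positive measure of a set $\mathcal{T}$ in a group with an invariant probability measure gives, by the usual maximal-disjoint-translates pigeonhole, a covering of $\mathcal{G}$ by finitely many translates of $\mathcal{T}\mathcal{T}^{-1}$, not of $\mathcal{T}$ itself. To get a bounded number of left shifts of $T_A$, as the theorem asserts, one needs the specific structure of the return-time set (a syndeticity statement coming from the ergodic theorem or from the largeness of sets of the form $\{g:\mu(B\cap gB)>\mu(B)^2-\e\}$), which your sketch does not supply. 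Second, the quantitative bookkeeping in the transfer (that the finitary count ``more than $(\alpha^4-\e)|G|^2$ triangles'' corresponds, up to infinitesimals, to $F(g)\geq\alpha^4-\e$ for the Loeb integral, uniformly enough to run \L o\'{s}'s Theorem in both directions) is asserted but not carried out; it is routine for experts but is part of what a complete proof must contain. In short: the transfer skeleton is plausible and consistent with how \cite{BRZ} proceeds, but the central ergodic inequality and the covering/syndeticity step are missing, so the proposal does not establish the theorem.
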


\subsection{Self-Bohrifying groups}

The application in this section is related to topological groups. We shall treat all groups in previous sections as discrete groups.

\begin{de}
A \emph{Bohr compactification} of a topological group $G$ is a continuous homomorphism $b:G\to bG$ such that any continuous homomorphism from $G$ to a compact group factors uniquely through $b$.
\end{de}

\begin{rem}
\ \begin{en}
\item The Bohr compactification exists for any group by the work of Holm \cite{Holm}. It is obviously unique up to a unique isomorphism. 
\item Clearly, a discrete group is minimally almost periodic iff it has trivial Bohr compactification. Note that for a discrete group, any abstract homomorphism from it to another topological group is automatically continuous.
\end{en}
\end{rem}

\begin{de}
A topological group $G$ is said to be \emph{self-Bohrifying} if its Bohr compactification $bG$ is the same abstract group as $G$, but with a compact topology.
\end{de}

By the results and techniques of this paper, one can find many examples of self-Bohrifying groups. In particular, we have the following theorem.

\begin{thm}
\label{app:Bohr}
Let $n$ be a positive integer. Let $G_i$ be a sequence of increasingly quasirandom groups in $\cC_n$, the class defined as in Theorem~\ref{main1}. Then $\prod_{i\in\N} G_i$ is self-Bohrifying as a discrete group.
\end{thm}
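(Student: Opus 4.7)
I plan to put on $G := \prod_{i \in \N} G_i$ the product topology with each $G_i$ given the discrete topology (which is compact since each $G_i$ is finite), making $G$ a compact topological group by Tychonoff. To identify this compact group with the Bohr compactification of the discrete group $G$, I will verify the universal property: every homomorphism $\phi$ from the discrete group $G$ to a compact group $K$ is continuous in this product topology. By the Peter--Weyl theorem it suffices to treat $K = U_D(\C)$ for every $D \in \Z^+$.

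The core step is to show that any such $\phi \colon G \to U_D(\C)$ vanishes on a cofinite tail $H_N := \prod_{i > N} G_i$. For this I reuse the covering-property machinery from the proof of Theorem~\ref{main1}. Given $D$, I choose $D', K_1, K_2, m_1, m_2$ with $m_j > c_D[(3n-2)K_j]^{D^2}$ (where $c_D$ is the constant of Theorem~\ref{crit}) such that every $D'$-quasirandom group in $\cC_n$ has the symmetric double covering property $[((3n-2)K_1, m_1), ((3n-2)K_2, m_2)]$. Since $D_i \to \infty$, I pick $N$ so large that $D_i > D'$ whenever $i > N$. Each such $G_i$ then enjoys this covering property, Proposition~\ref{bp:cover} transfers it to the direct product $H_N$, and Theorem~\ref{crit} then forces $H_N$ to be $D$-quasirandom, whence $\phi|_{H_N}$ is trivial.

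With the tail killed, $\phi$ factors through $G / H_N \cong \prod_{i \le N} G_i$, a finite group. Because the open normal subgroup $H_N$ is contained in $\ker\phi$, the kernel is open in the product topology, so $\phi$ is continuous. The main obstacle is supplying uniform covering-property parameters across the $G_i$ with $i > N$, as required by Proposition~\ref{bp:cover}; this is exactly what the proof of Theorem~\ref{main1} provides, since those parameters depend only on $D$ and on the fixed bound $n$ rather than on the individual groups. Modulo this uniformity check, the argument is a direct translation of the ultraproduct technique of Theorem~\ref{main1} to the direct-product setting.
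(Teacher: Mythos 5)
Your argument is correct, but it takes a genuinely different route from the paper. The paper does not touch the universal property directly: it invokes the Hart--Kunen criterion (Theorem~\ref{HKtheorem}), which reduces self-Bohrification to two facts --- all but finitely many $G_i$ are perfect, and the reduced product $\prod_{i\in\N}G_i/\coprod_{i\in\N}G_i$ is minimally almost periodic --- and then proves the latter by pushing the uniform symmetric double covering property through arbitrary products \emph{and quotients} via Proposition~\ref{bp:cover} before applying Theorem~\ref{crit}. You instead verify the universal property by hand: equip $\prod_{i\in\N}G_i$ with the compact product topology, reduce a homomorphism into an arbitrary compact group to finite-dimensional unitary representations via Peter--Weyl, and show each such representation kills an open tail $H_N=\prod_{i>N}G_i$ because the tail, being a product of $D'$-quasirandom members of $\cC_n$ with uniform covering parameters (Lemma~\ref{lem:conv}, Corollary~\ref{thm:conv}, Propositions~\ref{prop:bcc} and~\ref{bp:cover}), is highly quasirandom by Theorem~\ref{crit}; continuity then follows since the representation factors through the finite discrete quotient $G/H_N$. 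What your route buys is self-containedness --- no appeal to Hart--Kunen, no need to isolate perfectness, and in fact a slightly stronger conclusion (every finite-dimensional unitary representation of the full discrete product factors through a finite sub-product); what the paper's route buys is brevity, since Proposition~\ref{bp:cover} already handles quotients and the external lemma absorbs all the topological bookkeeping about the Bohr compactification. Two small points to tighten: to force $\phi|_{H_N}$ trivial for a $D$-dimensional representation you need the tail to be $(D+1)$-quasirandom (so run your parameter choice with $D+1$, or quantify over all $D$), and in the Peter--Weyl step you should note that a compact group embeds as a closed subgroup of a product of unitary groups, so continuity of all unitary coordinates does give continuity into $K$ itself; both are routine.
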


\begin{cor}
Let $G_i$ be a sequence of non-abelian finite simple groups of increasing order. Then $\prod_{i\in\N} G_i$ is self-Bohrifying as a discrete group.
\end{cor}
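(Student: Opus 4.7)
The plan is to show that for every finite-dimensional unitary representation $\phi$ of the discrete group $H := \prod_{i \in \N} G_i$, there exists some finite $N$ with $\phi|_{\prod_{i \geq N} G_i}$ trivial; equivalently, $\phi$ factors through the projection $H \to \prod_{i<N} G_i$. Granted this, the kernel of $\phi$ contains the tail $\prod_{i\geq N} G_i$, which is a basic clopen neighborhood of the identity in the natural compact product topology on $H$, so $\phi$ is continuous in that topology. Combined with Peter-Weyl (every compact group embeds into a product of unitary groups), every homomorphism from $H_{\mathrm{disc}}$ into a compact group factors continuously through the compact product, identifying the Bohr compactification with $H$ equipped with that topology and thus showing $H$ is self-Bohrifying.

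To establish the factoring, let $\phi : H \to U_D(\C)$ be an arbitrary finite-dimensional unitary representation. I would run the covering-property machinery of Sections~\ref{sec:local}--\ref{sec:main1}: by Lemma~\ref{lem:conv} together with Proposition~\ref{prop:bcc} and Corollary~\ref{thm:conv}, for any target quasirandomness $D+1$ there exist $D'$ and parameters $K_1,K_2,m_1,m_2$ with $m_j > c_{D+1}\bigl[(3n-2)K_j\bigr]^{(D+1)^2}$ such that every $D'$-quasirandom group in $\cC_n$ has the symmetric double covering property $[((3n-2)K_1,m_1),((3n-2)K_2,m_2)]$. Since the $G_i$ are increasingly quasirandom, all $G_i$ with $i \geq N$ are $D'$-quasirandom for a suitable $N = N(D)$, hence each possesses this covering property.

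Now I apply Proposition~\ref{bp:cover}(3) to the family $\{G_i\}_{i \geq N}$: the arbitrary direct product $H_N := \prod_{i \geq N} G_i$ inherits exactly the same symmetric double covering property with the same parameters. Theorem~\ref{crit} then promotes this to genuine $(D+1)$-quasirandomness of $H_N$ as an abstract group. Since $\dim \phi \leq D < D+1$, the restriction $\phi|_{H_N}$ must be trivial, so $H_N \subseteq \ker\phi$ and $\phi$ factors through $H/H_N \cong \prod_{i<N} G_i$, as required.

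The main obstacle is the step of promoting quasirandomness of each $G_i$ individually to quasirandomness of the whole infinite product $H_N$. A naive argument by decomposing into coordinates fails, since a generic element of $H_N$ has infinite support and is not a finite product of elements from the individual factors, so triviality of $\phi$ on each $G_i$ alone does not force triviality on $H_N$. The symmetric double covering property is the right substitute: as a local, element-wise statement it propagates through arbitrary direct products by Proposition~\ref{bp:cover}(3), while Theorem~\ref{crit} then converts this local data into a global quasirandomness bound for $H_N$. The stated corollary about non-abelian finite simple groups is immediate, since such groups are non-abelian quasisimple and so lie in $\cC_1$, and increasing order forces increasing quasirandomness by statement 3 of Example~\ref{exmqr}.
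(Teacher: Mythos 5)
Your argument is correct, and for the decisive step it takes a genuinely different route from the paper. The paper proves the parent result (Theorem~\ref{app:Bohr}) by showing that the \emph{reduced product} $\prod_{i\in\N}G_i/\coprod_{i\in\N}G_i$ is minimally almost periodic (via the same covering-property machinery you invoke: Lemma~\ref{lem:conv}, Proposition~\ref{bp:cover}, Theorem~\ref{crit}) and then quoting the Hart--Kunen criterion (Theorem~\ref{HKtheorem}), which requires the factors to be perfect; the corollary for non-abelian finite simple groups follows since they lie in $\cC_1$ and increasing order forces quasirandom degree to tend to infinity, exactly as you say. You instead bypass Hart--Kunen: you show that every $D$-dimensional unitary representation of $\prod_{i\in\N}G_i$ is trivial on a tail subgroup $\prod_{i\geq N}G_i$, because that tail is an arbitrary product of sufficiently quasirandom factors and hence, by Proposition~\ref{bp:cover}(3) plus Theorem~\ref{crit}, is $(D+1)$-quasirandom; since the tail is a clopen subgroup in the product topology, every finite-dimensional representation is continuous, and Peter--Weyl (compact groups embed in products of unitary groups) then lets you verify the universal property of the Bohr compactification directly for $H$ with its compact product topology. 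What each approach buys: the paper's route is shorter given the reference, needing only the weaker-looking statement that the reduced product is minimally almost periodic, but it imports Hart--Kunen as a black box together with its perfectness hypothesis; your route is self-contained, makes transparent the real mechanism (finite-dimensional representations of the product only see finitely many coordinates), and the perfectness issue never arises because tail quasirandomness already handles one-dimensional representations. The only points to state carefully in a full write-up are the Peter--Weyl step (finite-dimensional unitary representations separate points of a compact Hausdorff group, so any homomorphism to a compact group is continuous once all its unitary matrix coefficients are) and the trivial uniqueness of the factorization, which holds because your Bohr map is the identity and hence surjective; both are routine. Note also that both your argument and the paper's rely equally on Proposition~\ref{bp:cover}(3) for infinite products, so no extra burden is incurred there.
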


We will prove Theorem~\ref{app:Bohr} by first showing that $\prod_{i\in\N} G_i / \coprod_{i\in\N} G_i$ is minimally almost periodic, and then using a lemma by Hart and Kunen \cite{HK}.

\begin{de}
Let $G_i$ be a sequence of groups.
\begin{en}
\item Their \emph{sum} is the group $\coprod_{i\in\N} G_i=\{g\in\prod_{i\in\N} G_i:$ only finitely many coordinates of $g$ is nontrivial$\}$.
\item Their \emph{reduced product} is the group $\prod_{i\in\N} G_i / \coprod_{i\in\N} G_i$.
\end{en}
\end{de}

\begin{lem}[Hart and Kunen {\cite[Lemma 3.8]{HK}}]
\label{HKtheorem}
Let $\{G_i\}_{i\in\N}$ be a sequence of finite groups. Then $\prod_{i\in\N} G_i$ is self-Bohrifying if all but finitely many $G_i$ are perfect groups, and $\prod_{i\in\N} G_i/\coprod_{i\in\N} G_i$ has trivial Bohr compactification, i.e., $\prod_{i\in\N} G_i/\coprod_{i\in\N} G_i$ is minimally almost periodic.
\end{lem}

\begin{proof}[Proof of Theorem~\ref{app:Bohr}]
All 2-quasirandom groups are perfect. So it is enough to show that the reduced product of $G_i$ is minimally almost periodic, i.e., it is $D$-quasirandom for all $D$.

For any integer $D$, let $c=f(D)(3n-2)^{D^2}$. We can find $D',K_1,K_2,m_1,m_2$ as in Corollary~\ref{cor:conv} and Lemma~\ref{lem:conv}.

Let $G_i$ be a sequence of increasingly quasirandom groups in $\cC_n$. Then all but finitely many $G_i$ will be $D'$-quasirandom. Since we are interested in the reduced product, which is invariant under the change of finitely many coordinates, we may WLOG assume that all $G_i$ are $D'$-quasirandom.

Since $G_i\in\cC_n$, each $G_i$ is a direct product of $D'$-quasirandom groups in $\cC_{QS}\cup\cC_{CS(n)}$. These factor groups must then have the symmetric double covering property $[((3n-2)K_1,m_1),((3n-2)K_2,m_2)]$. By Proposition~\ref{bp:cover}, $G_i$ must also have this symmetric double covering property $[((3n-2)K_1,m_1),((3n-2)K_2,m_2)]$. 

Now by Proposition~\ref{bp:cover}, covering properties are preserved by arbitrary products and quotients. So $\prod_{i\in\N} G_i$ will have this covering property, and the reduced product $\prod_{i\in\N} G_i/\coprod_{i\in\N} G_i$ will also have this covering property. 

Since $m_1>c[(3n-2)K_1]^{D^2}$, $m_2>c[(3n-2)K]^{D^2}$, the reduced product is $D$-quasirandom by Proposition~\ref{crit}. So we are done by Lemma~\ref{HKtheorem}.
\end{proof}

\end{document}